\newif\ifarxiv
\newenvironment{keywords}{\medskip\textbf{Keywords:}}{}
\newenvironment{AMS}{\medskip\textbf{AMS subject classifications (2020).}}{}
\newtheorem{theorem}{Theorem}
\newtheorem{remark}{Remark}
\newtheorem{lemma}{Lemma}
\newtheorem{proposition}{Proposition}
\theoremstyle{plain}
\DeclareMathOperator{\diag}{diag}
\newcommand{\fro}{\mathsf F}
\newcommand*{\set}[1]{\left\lbrace#1\right\rbrace}
\newcommand*{\herm}{^*}
\newcommand*{\iherm}{^{-*}}
\newcommand{\bmat}[1]{\begin{bmatrix}#1\end{bmatrix}}
\newcommand*{\conj}[1]{\bar{#1}}
\def\adots{\mathinner{\mkern2mu\raise1pt\hbox{.}\mkern2mu
    \raise4pt\hbox{.}\mkern2mu\raise7pt\hbox{.}\mkern1mu}}
\newcommand*{\macheps}{\bm u}
\newcommand*{\machepslow}{\bm{u}_{\mathrm{low}}}
\newcommand*{\epsqr}{\varepsilon_{\mathrm{qr}}}
\newcommand*{\epsrrqr}{\varepsilon_{\mathrm{rrqr}}}
\newcommand*{\epsJh}{\varepsilon_{\mathrm J}}
\newcommand*{\epsJbar}{\bar\varepsilon_{\mathrm J}}
\newcommand*{\epsSVD}{\varepsilon_{\mathrm{SVD}}}
\newcommand*{\epsU}{\varepsilon_{\mathrm U}}
\newcommand*{\epsQ}{\varepsilon_{\mathrm Q}}
\newcommand*{\tol}{\mathtt{tol}}
\newcommand*{\tolalg}{\mathtt{tol}_{\mathrm{alg}}}
\newcommand*{\tolcond}{\mathtt{tol}_{\mathrm{cond}}}
\newcommand*{\tolorth}{\mathtt{tol}_{\mathrm{orth}}}
\newcommand*{\single}{\mathtt{lower}}
\newcommand*{\double}{\mathtt{working}}
\newcommand*{\s}{\mathrm{low}}
\newcommand*{\bigO}{O}
\DeclareMathOperator{\fl}{f{}l}
\DeclareMathOperator{\up}{up}
\DeclareMathOperator{\low}{low}
\DeclareMathOperator{\off}{off}
\newcommand*{\fllow}{\fl_{\mathrm{low}}}
\begin{document}

\ifarxiv
\title{A mixed precision Jacobi SVD algorithm%
}
\author[1,2,3]{Weiguo Gao}
\author[4]{Yuxin Ma}
\author[2,3]{Meiyue Shao}
\affil[1]{School of Mathematical Sciences, Fudan University, Shanghai 200433,
China}
\affil[2]{School of Data Science, Fudan University, Shanghai 200433, China}
\affil[3]{MOE Key Laboratory for Computational Physical Sciences, Fudan
University, Shanghai 200433, China}
\affil[4]{Department of Numerical Mathematics, Faculty of Mathematics and Physics, Charles University, Sokolovsk\'{a} 49/83, 186 75 Praha 8, Czechia}
\maketitle
\else
\title{A mixed precision Jacobi SVD algorithm}

\author{Weiguo Gao}
\affiliation{%
  \institution{Fudan University}
  \city{Shanghai}
  \country{China}}
\email{wggao@fudan.edu.cn}

\author{Yuxin Ma}
\affiliation{%
  \institution{Charles University}
  \city{Prague}
  \country{Czechia}
}
\email{yuxin.ma@matfyz.cuni.cz}

\author{Meiyue Shao}
\affiliation{%
 \institution{Fudan University}
 \city{Shanghai}
 \country{China}}
\email{myshao@fudan.edu.cn}
\fi

\begin{abstract}
We propose a mixed precision Jacobi algorithm for computing the singular value
decomposition (SVD) of a dense matrix.
After appropriate preconditioning, the proposed algorithm computes the SVD in
a lower precision as an initial guess, and then performs one-sided Jacobi
rotations in the working precision as iterative refinement.
By carefully transforming a lower precision solution to a higher precision one,
our algorithm achieves about \(2\times\) speedup on the x86-64 architecture
compared to the usual one-sided Jacobi SVD algorithm in LAPACK, without
sacrificing the accuracy.

\ifarxiv

\begin{keywords}
Singular value decomposition,
Jacobi algorithm,
mixed precision algorithm,
high relative accuracy
\end{keywords}

\begin{AMS}
65F15
\end{AMS}
\else

\fi
\end{abstract}

\section{Introduction}
\label{sec:introduction}
In recent years, partly because of the quick development of machine learning,
low precision arithmetic becomes more and more widely supported by hardware on
various computational units such as GPUs.
Since lower precision arithmetic is in general faster than higher precision
arithmetic, the availability of low precision arithmetic motivates researchers
to rethink about efficiency and accuracy of existing numerical algorithms.
It is possible to make significant speedup if an algorithm can make clever use
of low precision arithmetic.
As a consequence, mixed precision algorithms have attracted a lot of attention,
especially in numerical linear algebra and high performance
computing~\cite{Survey2021,AADGHKLTYY2016,HM2022}.

There are mainly two classes of mixed precision algorithms.
Traditional mixed precision algorithms, such as classical iterative refinement
for linear systems~\cite{JW1977,Moler1967} and Cholesky QR
algorithm~\cite{YTD2015}, improve the numerical stability of the algorithm by
temporarily using a higher precision at certain points.
In contrast with traditional ones, modern mixed precision algorithms use a
lower precision to perform part of the computations in order to reduce the
execution time of programs, while preserving accuracy of the final solution
the same as that of fixed precision algorithms.

There are a number of works in the literature focusing on mixed precision
linear solvers.
In~2018, a general framework for solving a nonsingular linear system \(Ax=b\)
based on iterative refinement with three precisions was proposed by Carson and
Higham~\cite{CH2018}.
By computing the most expensive operation, LU factorization of \(A\), in a
lower precision, a mixed precision algorithm can solve linear systems up to
twice as fast as traditional dense linear solvers without loss of accuracy.
Refinement through GMRES (GMRES-IR) utilizing lower precision LU factorization
as the preconditioner is also developed for ill-conditioned linear system, in
order to obtain higher performance than traditional dense linear solvers.
More recently, Carson and Khan~\cite{CK2022} proposed that mixed precision
GMRES-IR can be further accelerated by sparse approximate inverse
preconditioners.
Carson, Higham, and Pranesh also proposed three-precision GMRES-IR with QR
preconditioning to solve linear least squares problems through augmented
systems~\cite{CHP2020}.

Compared to dense linear solvers, iterative refinement for dense eigensolvers
(that computes a large number of eigenpairs) is more complicated, though most
eigensolvers are essentially iterative.
Since several popular algorithms (such as the QR algorithm) are highly
structured, and cannot make full use of a near diagonal or near upper
triangular input, it seems difficult to use these algorithms directly for
iterative refinement.
Ogita and Aishima recently published several papers on Newton-like methods for
refining the solution of symmetric eigenvalue problems, including singular
value decomposition~\cite{OA2018,OA2019,OA2020}.
A similar but more complicated algorithm was proposed by Bujanovi\'c, Kressner,
and Schr\"oder to refine the Schur decomposition for nonsymmetric eigenvalue
problems~\cite{BKS2022}.
This refinement algorithm also leads to a mixed precision nonsymmetric
eigensolver.
There is another recent algorithm called iterative perturbative theory (IPT)
developed by Kenmoe, Kriemann, Smerlak, and Zadorin in~\cite{KKSZ2021} that
works for both symmetric and nonsymmetric eigenvalue problems.

Besides Newton-like algorithms, the Jacobi algorithm~\cite{Jacobi1846} is also
suitable for iterative refinement~\cite{BO2020,Wilkinson1968} since it
converges locally quadratically~\cite{Henrici1958}.
A natural idea is to quickly compute the solution in lower precision, and then
use the Jacobi algorithm to improve the accuracy to working precision.
This leads to a basic mixed precision eigensolver.
However, it is challenging to achieve high performance in this manner, as both 
preprocessing and refinement stages require \(O(n^3)\) work.
We observe that a basic mixed precision eigensolver usually has
relatively low performance (see Section~\ref{subsec:experiments-simplemsvj}).
In order to accelerate the computation---this is the motivation of exploiting
low precision arithmetic---it is crucial to carefully design the
algorithm so that the Jacobi algorithm can quickly refine the
low-precision solution.

In this work we propose a mixed precision Jacobi SVD algorithm.
Our algorithm makes use of low precision arithmetic as a preconditioning step,
and then refines the solution by the one-sided Jacobi algorithm developed by
Drma\v{c} and Veseli\'{c} in~\cite{DV2008a,DV2008b}.
On the x86-64 architecture our mixed precision algorithm is in general about
twice as fast as the fixed precision one in LAPACK.
Moreover, the mixed precision algorithm inherits high accuracy properties
of the Jacobi algorithm even if a large amount of work is performed in a
lower precision.
As an eigensolver, the Jacobi algorithm is usually not the first choice since
it is often slower than alternatives such as the divide and conquer
algorithm~\cite{BO2020, KKSZ2021}.
However, when high relative accuracy is desired, the Jacobi algorithm seems the
best candidate by far~\cite{Demmel1997, DV1992}.

The rest of this paper is organized as follows.
In Section~\ref{sec:preliminary} we briefly review the one-sided Jacobi SVD
algorithm.
Then in Section~\ref{sec:algorithm} we propose a mixed precision Jacobi SVD
algorithm and discuss algorithmic details.
Theoretical analyses on the mixed precision Jacobi SVD algorithm are provided
in Sections~\ref{sec:accuracy}--\ref{sec:analysis_efficiency_switch}:
in Section~\ref{sec:accuracy} we show that our mixed precision Jacobi SVD
algorithm is backward stable, and is as accurate as the fixed precision Jacobi
SVD algorithm;
in Sections~\ref{sec:accuracy_switch} we analyze the quality of
conditioning in a lower precision;
in Section~\ref{sec:analysis_efficiency_switch} we discuss the accuracy of
different methods for switching precisions.
Numerical experiments are provided in Section~\ref{sec:experiments} to
demonstrate the efficiency of our mixed precision algorithm.

\section{The one-sided Jacobi SVD algorithm}
\label{sec:preliminary}
In this section, we briefly review the one-sided Jacobi SVD algorithm
developed by Drma\v{c} and Veseli\'{c} in~\cite{DV2008a,DV2008b}.
Compared with other variants of the Jacobi SVD algorithm, this one
is superior in performance while preserving high accuracy.
Implementation of this Jacobi SVD algorithm has been available in LAPACK
starting from versions~3.2.%
\footnote{Real versions were released in LAPACK version~3.2, while complex versions
were released later in LAPACK version~3.6.0.}

\subsection{Jacobi rotations}
\label{subsec:basic-algorithm}
For any column vectors \(x_1\), \(x_2\in\mathbb C^m\), there exists a unitary
matrix
\[
J=\bmat{c & s \\ -\conj s & c}
\]
with \(c\in\mathbb R\) such that the columns of \([x_1,x_2]J\) are orthogonal
to each other.%
\footnote{Actually, the choice of \(J\) is not unique for the purpose of
orthogonalizing \(x_1\) and \(x_2\).
The Jacobi SVD algorithm always chooses the one that is closest to the
identity matrix to ensure convergence.}
This orthogonal transformation plays a key role in the one-sided Jacobi SVD
algorithm.
To compute the SVD of a matrix \(X\in\mathbb C^{m\times n}\) with \(m\geq n\),
the one-sided Jacobi SVD algorithm repeatedly orthogonalizes a pair of columns of
\(X\) until \(X\herm X\) becomes numerically diagonal, i.e., all off-diagonal
entries of \(X\herm X\) have negligible magnitudes.
In other words, a sequence of matrices
\[
X^{(k)} = X^{(k-1)}J^{(k)}, \qquad \text{(\(k=1\), \(2\), \(\dotsc\), \(N\))},
\]
with \(X^{(0)}=X\) is computed until \(X^{(N)}\) has numerically orthogonal
columns after \(N\) steps, where the unitary matrix \(J^{(k)}\) is a so-called
\emph{Jacobi plane rotation matrix} of the form
\[
J^{(k)}=\bmat{I_{p_k-1} \\
& c_{k} & & s_{k} \\
& & I_{q_k-p_k-1} & \\
& -\conj s_{k} & & c_{k} \\
& & & & I_{n-q_k}}
\]
that orthogonalizes \(p_k\)-th and \(q_k\)-th columns of \(X^{(k-1)}\).
Then by normalizing the columns of \(X^{(N)}\) (i.e., \(X^{(N)}=U_X\Sigma\)
where \(U_X\herm U_X=I_m\) and \(\Sigma\) is diagonal and positive definite)
and setting \(V_X=J^{(1)}J^{(2)}\dotsm J^{(N)}\), we obtain
\(X=U_X\Sigma V_X\herm\) as the computed SVD of \(X\).%
\footnote{In practice, appropriate reordering of \(U_X\) and \(\Sigma\) is
required so that the diagonal entries of \(\Sigma\) are descending.}

Notice that the choice of \((p_k,q_k)\)'s largely affects the convergence of
the Jacobi SVD algorithm.
One popular strategy that guarantees the convergence is the \emph{cyclic}
strategy that periodically performs \emph{sweeps} of the following sequence of
\((p_k,q_k)\)'s:
\begin{equation}
\label{eq:cyclic}
(1,2),(1,3),\dotsc,(1,n),
\quad (2,3),\dotsc,(2,n),
\quad (3,4),\dotsc,(3,n),
\quad \dotsc,
\quad (n-1,n).
\end{equation}
Algorithm~\ref{alg:svj} summarizes the basic one-sided Jacobi SVD algorithm.

\begin{algorithm}[!tb]
\caption{One-sided Jacobi SVD algorithm (without preconditioning)}
\label{alg:svj}
\begin{algorithmic}[1]
\REQUIRE A matrix \(X\in\mathbb C^{m\times n}\),
the convergence threshold \(\epsilon_{\tol}\in(0,+\infty)\),
maximal iteration count \(\mathtt{maxiter}\).
\ENSURE The singular value decomposition \(X=U_X\Sigma V_X\herm\).

\STATE \(V_X\gets I\).
\label{line:first}
\STATE \(\mathtt{converged}\gets\mathtt{false}\), \(\mathtt{iter} \gets 1\).
\WHILE{\(\mathtt{iter}<\mathtt{maxiter}\) and \(\mathtt{converged}=\mathtt{false}\)}
    \STATE \(\mathtt{converged}\gets\mathtt{true}\).
    \FOR{each pair \((p,q)\) with \(p<q\) in~\eqref{eq:cyclic}}
        \IF{\(\lvert X(:,q)\herm X(:,p)\rvert>\epsilon_{\tol}
                \lVert X(:,p)\rVert_2\lVert X(:,q)\rVert_2\)}
            \STATE Choose a Jacobi rotation matrix \(J\) that
                orthogonalizes \(X(:,p)\) and \(X(:,q)\).
            \STATE \(X\gets XJ\).
            \STATE \(V_X \gets V_XJ\).
\label{line:accumulate}
            \STATE \(\mathtt{converged}\gets\mathtt{false}\).
        \ENDIF
    \ENDFOR
    \STATE \(\mathtt{iter}\gets \mathtt{iter}+1\).
\ENDWHILE
\label{line:Xinf}
\FOR{\(i\gets1:n\)}
    \STATE \(\Sigma(i,i)\gets\lVert X(:,i)\rVert_2\).
    \STATE \(U_X(:, i)\gets X(:,i)/\Sigma(i,i)\).
\ENDFOR
\STATE Sort the diagonal entries of \(\Sigma\), and permute the columns of
    \(U_X\) and \(V_X\) accordingly.
\end{algorithmic}
\end{algorithm}

The convergence of the one-sided Jacobi SVD algorithm can be accelerated by
de~Rijk's pivoting strategy~\cite{deRijk1989}.
This pivoting strategy splits each sweep into \(n-1\) subsweeps---in the
\(i\)-th subsweep, it identifies the column of \(X_{i:n}\) with largest
Euclidean norm and interchanges this dominant column with \(X_i\), and then
orthogonalizes the column pairs \((p,q)\) for \(q=p+1\), \(\dotsc\), \(n\).%
\footnote{Throughout the paper we adopt the MATLAB colon notation to denote
indices that describe submatrices.}

\subsection{Preconditioning}
\label{subsec:precond1}
In~\cite{DV2008a}, Drma\v{c} and Veseli\'{c} discussed a carefully designed
preconditioning strategy for the one-sided Jacobi SVD algorithm.
Roughly speaking, the key idea is to transform \(A\) to a ``more diagonally
dominant'' matrix \(X\), so that the column norms of \(X\) are good
approximations of the singular values.

First, a rank-revealing QR (RRQR) factorization \(AP=Q_1R\) is computed,
where \(Q_1\in\mathbb C^{m\times n}\) has orthonormal columns,
\(R\in\mathbb C^{n\times n}\) is upper triangular, and
\(P\in\mathbb R^{n\times n}\) is a permutation matrix.
If the columns of \(R\) are close to diagonally dominant, we set \(X=R\);
otherwise, an LQ factorization, \(R=LQ_2\), is performed, and we set \(X=L\).
Finally, the matrix \(X\) is fed to the one-sided Jacobi SVD algorithm.
The preconditioned algorithm is shown in Algorithm~\ref{alg:presvj}.

\begin{algorithm}[!tb]
\caption{Preconditioned one-sided Jacobi SVD algorithm}
\label{alg:presvj}
\begin{algorithmic}[1]
\REQUIRE A matrix \(A\in\mathbb C^{m\times n}\),
the convergence threshold \(\epsilon_{\tol}\in(0,+\infty)\),
maximal iteration count \(\mathtt{maxiter}\).
\ENSURE The full singular value decomposition \(A=U\Sigma V\herm\).

\STATE Factorize \(A\) such that \(AP=Q_1R\).
\IF{the \(R\) is almost diagonal}
    \STATE \(X\gets R\), \(V\gets P\).
\ELSE
    \STATE Factorize \(R\) such that \(R=LQ_2\).
    \STATE \(X\gets L\), \(V\gets PQ_2\herm\).
\ENDIF
\STATE Compute the SVD \(X=U_X\Sigma V_X\herm\) by Algorithm~\ref{alg:svj}.
\STATE \(U\gets Q_1\cdot U_X\), \(V\gets V\cdot V_X\).
\end{algorithmic}
\end{algorithm}

The preconditioning strategy has been observed to effectively accelerate the
convergence of the one-sided Jacobi SVD algorithm.
Moreover, it provides opportunities for fast implementations without
explicitly accumulating the Jacobi plane rotations.
For instance, 
if \(X\) is expressed as \(X=D_rZ_r\), where \(D_r\) is diagonal and \(Z_r\) has normalized rows, then \(V_X\) can be constructed using \(V_X=X^{-1}U_X\Sigma\) when \(Z_r\) is reasonably well-conditioned (this often occurs after preconditioning).
Consequently, Step~\ref{line:accumulate} in Algorithm~\ref{alg:svj} can be omitted.
This strategy improves the computational efficiency because accumulating
rotations is usually very time-consuming.

\section{A mixed precision Jacobi SVD algorithm}
\label{sec:algorithm}
In this section, we propose a mixed precision Jacobi SVD algorithm.
The basic idea is to perform SVD in a lower precision as a preconditioner, and
then refine the solution by the one-sided Jacobi SVD algorithm in working
precision.
This idea yields a basic mixed precision Jacobi SVD algorithm as shown in
Algorithm~\ref{alg:basic-msvj}.
However, we shall see in Section~\ref{subsec:experiments-simplemsvj}
that the performance of this simple algorithm is far from satisfactory.
The number of sweeps in Step~\ref{line:basic-msvj-refinement} is not much
lower than that of Algorithm~\ref{alg:presvj} so that only limited speedup can
be expected.

In order to achieve better performance, we need to redesign a mixed
precision algorithm carefully.
We split the algorithm into the following stages.
\begin{enumerate}
\item
QR preconditioning is performed in order to accelerate the
convergence and also avoid unnecessary complication for rectangular matrices.
\item
The singular value decomposition is computed in a lower precision.
\item
The solution in lower precision is transformed back to the working
precision.
\item
The one-sided Jacobi SVD algorithm without preconditioning (i.e.,
Algorithm~\ref{alg:svj}) is applied to refine the solution in working
precision.
\end{enumerate}
The first three stages can be interpreted as a careful preconditioning
strategy involving low precision arithmetic that produces a reasonably
good initial guess for Algorithm~\ref{alg:svj}.
If the initial guess is of good quality, the last stage starts with
nearly orthogonal columns and then quick convergence can be expected due to
the asymptotic quadratic convergence of the Jacobi algorithm.
Our mixed precision algorithm is briefly summarized as
Algorithm~\ref{alg:msvj}.
In the following we discuss the four stages of Algorithm~\ref{alg:msvj} in
details.
We use \(\double(\cdot)\) and \(\single(\cdot)\), respectively, to explicitly
convert the data into working and lower precisions.
In addition, \(\fl(\cdot)\) and \(\fllow(\cdot)\), respectively, denote the computed results in working and lower precisions.

\begin{algorithm}[!tb]
\caption{Basic mixed precision Jacobi SVD algorithm}
\label{alg:basic-msvj}
\begin{algorithmic}[1]
\REQUIRE A matrix $A\in\mathbb C^{m\times n}$ with \(m\geq n\),
the threshold $\tolalg\in(0,+\infty)$ for choosing the lower precision SVD
algorithm,
the orthogonality threshold $\tolorth\in(0,+\infty)$,
the condition number threshold $\tolcond\in(0,+\infty)$.
\ENSURE The full singular value decomposition $A=U\Sigma V\herm$.

\STATE Apply the preconditioned one-sided Jacobi algorithm (Algorithm~\ref{alg:presvj}) in lower precision to obtain \(\single(A)=U_\s \Sigma_\s V\herm_\s\).
\STATE Convert \(V_\s\) to the working precision and compute the QR factorization \(\double(V_\s)=QR\).
\STATE \(Y\gets AQ\).
\STATE Apply the one-sided Jacobi algorithm (Algorithm~\ref{alg:svj}) to \(Y\), i.e., \(Y=U\Sigma V_Y\herm\).
\label{line:basic-msvj-refinement}
\STATE \(V\gets V_YQ\).
\end{algorithmic}
\end{algorithm}

\begin{algorithm}[!tb]
\caption{Mixed precision Jacobi SVD algorithm}
\label{alg:msvj}
\begin{algorithmic}[1]
\REQUIRE A matrix $A\in\mathbb C^{m\times n}$ with \(m\geq n\),
the threshold $\tolalg\in(0,+\infty)$ for choosing the lower precision SVD
algorithm,
the orthogonality threshold $\tolorth\in(0,+\infty)$,
the condition number threshold $\tolcond\in(0,+\infty)$.
\ENSURE The full singular value decomposition $A=U\Sigma V\herm$.

\IF {$m > n$}
\label{line:preproc-start}
    \STATE Compute the QR factorization of $A$ such that $A=Q_0R_1$.
    \STATE $A_1\gets R_1$.
\ELSE
    \STATE $Q_0 \gets I$.
    \STATE $A_1\gets A$.
\ENDIF
\STATE Apply the QR preconditioning like Algorithm~\ref{alg:presvj} to $A_1$
       such that $X=Q_1\herm A_1PQ_2\herm$, where $Q_2$ could be the
       identity matrix.
\label{line:old-start}
\IF{$\texttt{cond}_R \leq \tol_{\rm{cond}}$ and many trailing columns of \(X\) have small norms}
\label{line:preproc-Q-start}
    \STATE $Q\gets I$. \label{line:msvj:notmp}
\ELSE
    \STATE $X_t\gets \single(X)D^{-1}$, where $D$ is a diagonal matrix and
           $D_{ii}$ is 2-norm of the $i$-th column of $\single(X)$. \label{line:msvj:test-hardcase2}
    \STATE $\mathtt{orth}\gets\lVert X_t\herm X_t-I\rVert_{\max}$.
    \IF{$\mathtt{orth}\leq\tolorth$}
        \STATE $Q\gets I$. \label{line:msvj:handle-hardcase2}
    \ELSE
        \IF{$\mathtt{orth}\leq\tolalg$}
\label{line:mp-start}
            \STATE Apply the one-sided Jacobi SVD algorithm
                   (Algorithm~\ref{alg:svj}) in lower precision to obtain
                   $\single(X)=U_{\s}\Sigma_{\s}V_{\s}\herm$ without
                   generating $V_{\s}$ explicitly.
        \ELSE
            \STATE Apply the QR SVD algorithm in lower precision to obtain
                   $\single(X)=U_{\s}\Sigma_{\s}V_{\s}\herm$ without
                   generating $V_{\s}$ explicitly.
        \ENDIF
        \STATE Convert \(U_{\s}\) back to the working precision and compute
               the QR factorization $X\herm\double(U_{\s})=QR_2$.
    \ENDIF
\ENDIF
\STATE $Y\gets XQ$.
\label{line:preproc-end}
\STATE Apply the one-sided Jacobi algorithm (Algorithm~\ref{alg:svj}) to
\(Y\), i.e., $Y=U_X\Sigma V_Y\herm$ ($V_Y$ is not explicitly generated unless $V$ is needed).
\label{line:main-start}
\STATE \(U \gets Q_0 Q_1 U_X\), and optionally \(V \gets P Q_2\herm V_Y Q\) if \(V\) is needed.
\label{line:main-end}
\end{algorithmic}
\end{algorithm}

\subsection{QR preconditioning}
\label{subsec:precond2}
As mentioned in Section~\ref{subsec:precond1}, preconditioning can simplify
the computation and accelerate the convergence of the one-sided Jacobi SVD
algorithm.
The preconditioning strategy in Algorithm~\ref{alg:msvj} is inherited from
that in~\cite{DV2008a}, with some minor changes.
We remark that this preconditioning strategy is very effective so that the
one-sided Jacobi SVD algorithm can often converge in a few sweeps.
As a result the benefit for computing a low precision SVD may be limited
in some special cases.
In Section~\ref{subsec:special-cases}, we shall present additional techniques
to handle these special cases.

When the input is a square matrix, we simply use the same preconditioning
strategy as in~\cite{DV2008a}.
For a rectangular matrix \(A\in\mathbb C^{m\times n}\) with \(m>n\), instead
of performing a rank-revealing QR factorization as in~\cite{DV2008a}, we
perform a plain QR factorization \(A=QR_1\) to obtain an \(n\times n\)
matrix~\(R_1\) and then apply the preconditioning strategy for square matrices
to \(R_1\).
Since RRQR is very costly, our revised strategy in general reduces the
computational time unless \(m\) is really close to \(n\).

The RRQR algorithm%
\footnote{To be precise, the RRQR algorithm here refers to \texttt{XGEQP3}
in LAPACK, which implements the Businger--Golub QR factorization with column
pivoting;
see~\cite{BG1965, DB2008}.}
can also be improved by using low precision
arithmetic.
Compared to the plain QR factorization, the RRQR algorithm requires a lot of
effort to determine the column permutation.
Hence pivoting is the dominant cost for RRQR in practice.
Once the permutation is already known, computing a plain QR factorization on a
permuted matrix is much cheaper than performing the RRQR algorithm.
In order to determine the permutation, it suffices to apply the RRQR algorithm
in a lower precision.
This leads to a mixed precision RRQR algorithm as shown in
Algorithm~\ref{alg:mprrqr}.
Though this algorithm in fact computes QR factorizations twice, it is still
observed to be faster than the usual RRQR algorithm in working precision
on the x86-64 architecture since the most expensive part is performed using a
lower precision.
We use this mixed precision algorithm to compute the RRQR factorization in the
preconditioning stage.

\begin{algorithm}[!tb]
\caption{Mixed precision RRQR algorithm}
\label{alg:mprrqr}
\begin{algorithmic}[1]
\REQUIRE A matrix $A\in\mathbb C^{m\times n}$.
\ENSURE The RRQR factorization $AP=QR$.

\STATE Convert \(A\) to a lower precision one:
\(A_{\s}\gets\mathtt{lower}(A)\).
\STATE Compute the RRQR factorization \(A_{\s}P=Q_{\s}R_{\s}\) in a lower
precision.
\STATE Use \(P\) to permute the columns of \(A\), i.e., \(A\gets AP\).
\STATE Compute the QR factorization (of the permuted matrix) \(A=QR\) in
working precision.
\end{algorithmic}
\end{algorithm}

\subsection{SVD in a lower precision}
After the first stage, we have already transformed \(A\) to a square matrix
\(X\) whose SVD is easier to compute.
The goal of the second stage in our algorithm is to compute the singular value
decomposition \(X=U_X\Sigma V_X\herm\) in a lower precision.

When computing the SVD in a lower precision, there are several possible
choices, such as Jacobi algorithm, QR algorithm, and divide and conquer
algorithm.
The divide and conquer algorithm is not a good option for highly accurate
computation~\cite{DV1992,Demmel1997,DV2008b} because we observed that the computed singular vectors are often not sufficiently
accurate so that many sweeps will be needed by the subsequent one-sided Jacobi
SVD algorithm after switching back to the working precision.
Though the QR algorithm is in general less accurate than the Jacobi
algorithm~\cite{DV1992}, Drma\v{c} pointed out in~\cite{D2017} that the QR
algorithm can compute the SVD accurately as long as it is carefully
preconditioned (using the strategy discussed in Section~\ref{subsec:precond2}).
Therefore we use either the Jacobi algorithm or the QR algorithm since they are
relatively accurate.
The choice mainly depends on the performance.

In order to determine which algorithm is used, we check the orthogonality of
\(X\) in the lower precision after QR preconditioning.
When the computed result \(\fllow(X)\) has nearly orthogonal columns (e.g.,
\(\lVert \fllow(X)\herm \fllow(X)-I\rVert=O(\machepslow^{1/2})\)), the Jacobi algorithm is chosen
as it converges very rapidly due to the quadratic convergence;
otherwise the QR algorithm is used because it is in general faster.

Finally, we remark that there is no need to compute both left and right
singular vectors of~\(X\) in this stage.
In the subsequent subsection we shall see that it suffices to compute \(U_X\)
only.

\subsection{Switching precisions}
\label{subsec:switch}
Once the singular value decomposition \(X=U_X\Sigma V_X\herm\) is
determined in a lower precision, it is necessary to transform \(V_X\)
into a unitary matrix \(Q\) in working precision.
This ensures that \(Y = XQ\) has nearly orthogonal columns, allowing it to be processed efficiently by the Jacobi SVD algorithm in the final stage.
We will outline the methods to build the unitary \(Q\) in working precision.

Note that directly converting the computed result of \(V_X\), i.e.,
\(\fllow(V_X)\), from lower precision to working precision is
problematic since the result is far from unitary in working precision,
i.e., \(\lVert \fllow(V_X)\herm \fllow(V_X)-I\rVert=O(\machepslow)
\gg\macheps\), where \(\macheps\) and \(\machepslow\), respectively, are the
unit round-off of working and lower precisions.
Hence a QR factorization of \(\fllow(V_X)=QR_2\) in working
precision is required to improve the orthogonality.
Then the matrix \(XQ\) can be fed to the one-sided Jacobi SVD algorithm in the
last stage.

There is an alternative approach to construct the unitary matrix \(Q\) in the
working precision.
Instead of working on the right singular vectors \(\fllow(V_X)\), we convert the computed left
singular vectors \(\fllow(U_X)\) to working precision.
Then a QR factorization \(X\herm \fllow(U_X)=QR_2\) is computed in working precision, so that theoretically the \(Q\)-factor of \(X\herm U_X\) is \(V_X\)
if computing \(U_X\), the product \(X\herm U_X\) and this QR factorization are all in exact arithmetic.
Finally, the matrix \(XQ\) is fed to the one-sided Jacobi SVD algorithm in the
last stage.
We prefer this approach in practice because forming \(V_X\) in the lower
precision is a very expensive operation.
For the one-sided Jacobi SVD algorithm, accumulating \(V_X\) is clearly very
costly;
for the QR SVD algorithm, generating \(V_X\) is observed to be much more
expensive than generating \(U_X\) in LAPACK's subroutine \texttt{xGESVD}.

\subsection{Refinement for nearly orthogonal matrices}
\label{subsec:refine}
After preconditioning and switching precisions, we eventually obtain \(\fl(Y)\approx U_X\Sigma\) with nearly orthogonal columns, while
columns of \(\fl(Y)\) are orthogonal at the level of \(\machepslow\).
In this stage, we apply the one-sided Jacobi SVD algorithm on \(\fl(Y)\) to
compute the SVD in the working precision, because the Jacobi SVD algorithm can take
advantage of the property that \(\fl(Y)\) has nearly orthogonal columns.
Typically it takes three sweeps for our mixed precision algorithm to refine
the solution if \(\machepslow\) is close to \(\macheps^{1/2}\) (which holds
for IEEE single and double precisions).

After switching precisions, trailing columns of \(\fl(Y)\), that correspond to
small singular values, can occasionally be less accurate compared to leading
columns that correspond to large singular values.
We test the orthogonality of \(\fl(Y)\) and partition \(\fl(Y)\) into two blocks
\(\fl(Y)=[Y_1,Y_2]\) so that \(\lVert Y_1\herm Y_1-I\rVert=O(\machepslow)\) while
\(\max\lbrace\lVert Y_2\herm Y_1\rVert,\lVert Y_2\herm Y_2-I\rVert\rbrace\gg\machepslow\).
If the \(Y_2\) block is non-empty, we apply Jacobi rotations in \(Y_2\) first,
and then apply Jacobi rotations between \(Y_1\) and \(Y_2\) in one sweep.
Thanks to careful QR preconditioning, the solution from lower precision
usually has good relative accuracy.
We have observed that in most cases the \(Y_2\) block is empty, i.e., the
orthogonality of \(Y\) is satisfactory.
Our strategy for handling \(Y_2\) is only a safeguard, and is rarely
activated.
We remark that in the final stage it is safe to skip several techniques of the 
one-sided Jacobi SVD algorithm proposed in~\cite{DV2008b} that aim at handling
difficult cases as the solution from the lower precision is reasonably
accurate.

The computation of right singular vectors \(V_X\) is also adjusted compared to
the algorithm in~\cite{DV2008b}.
When the problem is reasonably well-conditioned, \(V_X\) can be formed through
\(V_X=\Sigma^{-1}U_X\herm X\);
otherwise, \(V_X\) is computed by accumulating Jacobi rotations, which is
expensive.
In the latter case, instead of directly accumulating Jacobi rotations, we
first compute \(U_X\) without accumulating~\(V_X\).
Then similar to the strategy of switching precisions, we form \(V_X\) from the
QR factorization of \(X\herm U_X\).
These left and right singular vectors are used as the initial guess of another
round of the one-sided Jacobi SVD algorithm by accumulating Jacobi rotations.
Since such an initial guess is highly accurate, the final Jacobi SVD algorithm
always converges in one sweep.
The cost of computing \(V_X\) is much lower compared to that by the original
strategy in~\cite{DV2008b} since we avoid accumulating Jacobi rotations for
many sweeps.

\subsection{Handling special cases}
\label{subsec:special-cases}
The purpose of computing SVD in a lower precision is to reduce the number of
sweeps required by the one-sided Jacobi SVD algorithm in working precision.
As we mentioned in Section~\ref{subsec:refine}, the accuracy of the solution
from lower precision is at the level of \(O(\machepslow)\), and it still
requires about three sweeps to refine the solution in working precision.
However, there exist matrices for which the benefit of computing SVD in a lower
precision is limited.
For these matrices our mixed precision Jacobi SVD algorithm can be even slower
than the fixed precision algorithm.
Since the ultimate purpose of developing a mixed precision Jacobi SVD
algorithm is to improve the computational efficiency, it is necessary to
detect this type of matrices and avoid the overhead caused by the additional work
performed in lower precision.
In the following, we identify three special cases in which computing SVD in
lower precision needs to be avoided.

One special case can be easily detected after QR preconditioning.
After computing the RRQR factorization of \(A\), if the condition number of
\(R\), denoted by \(\texttt{cond}_R\) in Algorithm~\ref{alg:msvj}, is relatively small (e.g., \(\kappa(R)\leq\tolcond\) with
\(\tolcond=O(n^{1/4})\) based on our experience of numerical experiments), QR preconditioning often leads to quick convergence
of the one-sided Jacobi SVD algorithm in working precision, and hence we
do not compute the SVD in lower precision;
see Lines~\ref{line:preproc-Q-start}--\ref{line:msvj:notmp} in Algorithm~\ref{alg:msvj}.

Another special case occurs when the columns of \(X\) are almost
orthogonal in lower precision after QR preconditioning.
In this case, computing SVD in lower precision becomes totally needless
because it cannot further accelerate the convergence of the Jacobi SVD
algorithm in working precision.
To detect this case, we first scale the columns of \(X\) so that they have
unit column norms.
Then we check whether \(\lVert X\herm X-I\rVert_{\max}\) is below a prescribed
threshold \(\tolorth\),%
\footnote{For instance, for \(\machepslow=2^{-24}\approx6\times10^{-8}\), we
can choose \(\tolorth\approx10^{-5}\).}
where \(\lVert\cdot\rVert_{\max}\) represents the largest absolute value of
matrix elements.
If \(\lVert X\herm X-I\rVert_{\max}\leq\tolorth\) is detected, we simply skip
the SVD in lower precision and directly use the one-sided Jacobi SVD
algorithm in working precision to solve this problem;
see Lines~\ref{line:msvj:test-hardcase2}--\ref{line:msvj:handle-hardcase2} in Algorithm~\ref{alg:msvj}.
The test is relatively cheap compared to the entire SVD algorithm since it
only requires one matrix--matrix multiplication in lower precision.

The third special case often (but not always) occurs for columnwise graded
matrices of the form \(A = BD\), where \(D\) is an ill-conditioned diagonal
matrix.
When most singular values of~\(A\) are relatively small, it is not appropriate
to compute SVD in lower precision as the benefit is limited.
It is often possible to detect this case after QR preconditioning:
if many trailing columns of \(X\) have small norms,%
\footnote{A heuristic criterion is to check \(1/4\sim1/3\) of the columns.}
then the trailing singular values are also small; see Line~\ref{line:preproc-Q-start} in Algorithm~\ref{alg:msvj}.
In this case we skip the SVD in lower precision and move on to the
one-sided Jacobi SVD algorithm in working precision.

\section{Accuracy of the computed SVD}
\label{sec:accuracy}
In this section, we show that Algorithm~\ref{alg:msvj} is numerically stable
in the sense that it inherits nice high accuracy properties from usual
one-sided Jacobi SVD algorithms.
We use \(\,\tilde{}\,\) and \(\,\hat{}\,\) to denote theoretical and
computed results, respectively.

In the following we make two assumptions on orthogonal transformations.
We first assume that a computed QR factorization (possibly with column
exchange) is \emph{columnwise} backward stable, in the sense that
\begin{equation}
\label{eq:RRQR-error}
(A+\Delta A)P=\tilde Q_0\hat R_0,
\qquad
\lVert\Delta A(:, i)\rVert_2\leq\epsrrqr\lVert A(:,i)\rVert_2
\end{equation}
holds for each \(i\), where \(\epsrrqr\) is bounded by a low degree polynomial
of the matrix dimension times the unit round-off~\(\macheps\).
We also assume that steps~\ref{line:first}--\ref{line:Xinf} of
Algorithm~\ref{alg:svj} produces
\[
\hat X_{\infty}=(X+\Delta X_0)\tilde V_X
\]
with a \emph{rowwise} backward error
\[
\lVert\Delta X_0(i, :)\rVert_2\leq\epsJh\lVert X(i,:)\rVert_2,
\]
where \(\epsJh=\bigO(n\macheps)\).
These assumptions are plausible in practice because applying orthogonal
transformations from the left\slash{}right only introduces
columnwise\slash{}rowwise backward error in floating-point arithmetic.
The same notation and assumptions are used by the analysis in~\cite{DV2008a}.
Another constant \(\epsJbar=\epsJh+\macheps(1+\epsJh)\) is also defined to
simplify the notation.

Steps~\ref{line:preproc-start}--\ref{line:preproc-end} of
Algorithm~\ref{alg:msvj} produces an initial guess, which will be iteratively
refined by the one-sided Jacobi SVD algorithm in working precision
(i.e., steps~\ref{line:main-start}--\ref{line:main-end}).
The accuracy of the final solution is mainly determined by the Jacobi SVD
algorithm in working precision, as long as the initial guess is
reasonably accurate.
Since the numerical stability of steps~\ref{line:main-start}--%
\ref{line:main-end} has already been analyzed in~\cite{DV2008a}, we shall
only focus on the difference between our algorithm and that in~\cite{DV2008a}.

Our preconditioning strategy is similar to that in~\cite{DV2008a}.
For rectangular matrices, Algorithm~\ref{alg:msvj} performs an extra QR
factorization in the preprocessing stage.
Theoretically, this additional QR factorization can be merged with the
subsequent RRQR factorization, because \(A=Q_1R_1\) and \(R_1P=Q_2R_2\) can be
interpreted as \(AP=(Q_1Q_2)R_2\).
According to~\eqref{eq:RRQR-error}, this RRQR factorization is columnwise
stable.%
\footnote{The constance \(\epsqr\) in~\eqref{eq:RRQR-error} needs to be chosen
slightly larger than its tightest estimate so that~\eqref{eq:RRQR-error} holds
even when an extra QR factorization is performed.}
In~\cite{DV2008a}, the analysis for preconditioning remains valid since it
merely assumes the columnwise stability of the RRQR factorization.

The analysis of~\cite{DV2008a} is based on the fact that applying the one-sided Jacobi
SVD algorithm \emph{without} preconditioning (i.e.,
\texttt{DGESVJ}\slash\texttt{ZGESVJ}) to \(X\) produces a rowwise stable solution;
see~\cite[Proposition~4.1]{DV2008a}.
The major difference between our algorithm and that in~\cite{DV2008a} is that
we apply \texttt{DGESVJ}\slash\texttt{ZGESVJ} (assuming the working precision is
double precision) to the matrix \(XQ\) instead of \(X\).
Notice that the multiplication of \(X\) and any unitary matrix can be computed in a rowwise stable manner.
Naturally we can expect that applying the one-sided Jacobi SVD algorithm without preconditioning to the computed result \(XQ\) is also rowwise stable,
where \(Q\) can be any unitary matrix.
To show the stability of Algorithm~\ref{alg:msvj}, it suffices to establish
the following Proposition~\ref{prop:dsvj}, which is a revised version
of~\cite[Proposition~4.1]{DV2008a} tailored to our algorithm.

\begin{proposition}
\label{prop:dsvj}
Let $\hat{U}_X$ and $\hat{\Sigma}$, respectively, consist of the computed
left singular vectors and singular values of the matrix $\fl(X\hat{Q})$.
Then there exists a unitary matrix $\Tilde{V}_{X}$ and a backward perturbation
\(F\) such that
\[
X + F = \hat{U}_X\hat{\Sigma}\Tilde{V}_{X}\herm,
\]
where
\begin{equation}
\label{eq:rowwise-bound}
\lVert F(i,:)\rVert_2\leq(\epsJbar(1+\epsrrqr)+\epsrrqr)\lVert X(i,:)\rVert_2
\end{equation}
for all $i$.
In addition, if \(\hat V_X\) consists of the computed right singular vectors,
then \(\hat{U}_X\hat{\Sigma}\hat{V}_{X}\herm = (X + F)(I + E_0\herm)\),
where \(\lVert E_0\rVert_2\leq \sqrt{n}\,\epsJbar\).
\end{proposition}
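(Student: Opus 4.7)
The plan is to chain together two rowwise backward-error bounds, one for each right-multiplication by an orthogonal factor that turns $X$ into $\tilde U_X\tilde\Sigma$: first the explicit product $\fl(X\tilde Q)$, and then the Jacobi sweep that orthogonalizes its columns. Because both factors act from the right, both contributions remain rowwise, and their composition can be absorbed into a single rowwise perturbation $F$ of $X$.

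First, applying the rowwise-stability bound for right-multiplication by an orthogonal factor (the right-side analogue of~\eqref{eq:RRQR-error}), I would write $X' := \fl(X\tilde Q) = (X + \Delta X_1)\hat Q$ with $\hat Q$ exactly unitary and $\lVert\Delta X_1(i,:)\rVert_2 \leq \epsqr\lVert X(i,:)\rVert_2$. In particular $\lVert X'(i,:)\rVert_2 \leq (1 + \epsqr)\lVert X(i,:)\rVert_2$ because $\hat Q$ preserves row 2-norms. Next, the paper's rowwise Jacobi-stability assumption applied to $X'$ yields $\tilde X = (X' + \Delta X_2)\hat W$ with $\hat W$ unitary and $\lVert\Delta X_2(i,:)\rVert_2 \leq \epsJ\lVert X'(i,:)\rVert_2$; the column normalization in steps~15--17 of Algorithm~\ref{alg:svj} adds an entrywise rounding of size $\macheps$ per column, which I would absorb into $\Delta X_2$ by replacing $\epsJ$ with $\epsJbar = \epsJ + \macheps(1+\epsJ)$, exactly as the definition of $\epsJbar$ is designed to allow.

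Collecting the orthogonal factors on the right then gives
\[
\tilde U_X\tilde\Sigma = (X + \Delta X_1 + \Delta X_2\hat Q^*)\,\hat Q\hat W,
\]
so setting $F := \Delta X_1 + \Delta X_2\hat Q^*$ and $\hat V_X := \hat Q\hat W$ (which is unitary) yields the factorization $X + F = \tilde U_X\tilde\Sigma\hat V_X^*$. Using $\lVert\Delta X_2(i,:)\hat Q^*\rVert_2 = \lVert\Delta X_2(i,:)\rVert_2$ (again by unitarity of $\hat Q^*$ acting on a row vector) together with the bound on $\lVert X'(i,:)\rVert_2$, the triangle inequality then delivers $\lVert F(i,:)\rVert_2 \leq \epsqr\lVert X(i,:)\rVert_2 + \epsJbar(1+\epsqr)\lVert X(i,:)\rVert_2$, which is exactly~\eqref{eq:rowwise-bound}.

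For the second statement, I would invoke a standard backward-error bound for the accumulation of orthogonal factors (each rotation contributes $O(\macheps)$, and the $O(n)$ rotations used to form $\tilde V_X$ accumulate to a spectral-norm error bounded by $\sqrt n\,\epsJbar$) to obtain $\lVert\tilde V_X - \hat V_X\rVert_2 \leq \sqrt n\,\epsJbar$. Setting $E_0 := \tilde V_X\hat V_X^* - I$ then gives $\hat V_X\tilde V_X^* = I + E_0^*$ and
\[
\tilde U_X\tilde\Sigma\tilde V_X^* = (X + F)\hat V_X\tilde V_X^* = (X + F)(I + E_0^*),
\]
with $\lVert E_0\rVert_2 = \lVert\tilde V_X - \hat V_X\rVert_2 \leq \sqrt n\,\epsJbar$. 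The main obstacle is the bookkeeping that propagates the normalization rounding through the Jacobi step to upgrade $\epsJ$ to $\epsJbar$, and that certifies the rowwise character of the backward error survives the extra right-multiplication by $\tilde Q$; the key observation making this work is that $\hat Q^*$ acts on each row by an exact unitary rotation and therefore preserves its 2-norm exactly.
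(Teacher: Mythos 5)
Your argument is correct and follows essentially the same route as the paper: first write $\fl(X\tilde Q)=(X+\Delta X_1)\hat Q$ with a rowwise bound and $\hat Q$ exactly unitary, then invoke the rowwise backward-stability of the Jacobi sweep (Proposition~4.1 of~\cite{DV2008a}), and finally absorb the second perturbation through $\hat Q\herm$ using unitary invariance of row $2$-norms to get a single rowwise $F$. The only cosmetic difference is that the paper makes the origin of the exactly unitary $\hat Q$ explicit by first stating the backward error of the QR factorization of $X\herm\double(U_{\s})$, whereas you take the existence of $\hat Q$ with the rowwise product bound as given; you also spell out the ``easy to verify'' second claim about $E_0$ (via $E_0=(\tilde V_X-\hat V_X)\hat V_X\herm$ and unitary invariance), which the paper leaves to the reader.
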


\begin{proof}
We only prove~\eqref{eq:rowwise-bound} since the rest is easy to verify.

There exists a unitary matrix $\Tilde{Q}$ and a small perturbation
$\Delta (X\herm U_{\s})$ such that
\[
X\herm\double(U_{\s}) + \Delta (X\herm U_{\s})
= \Tilde{Q}\hat{R}_t.
\]
Then the computed product satisfies
$\fl(X\hat{Q}) = (X + \Delta X_0)\Tilde{Q}$,
where
\begin{equation} \label{eq:prop-proof:normDX0}
\begin{split}
\lVert \Delta X_0(i, :)\rVert_2\leq \epsrrqr\lVert X(i, :)\rVert_2.
\end{split}
\end{equation}
By~\cite[Proposition~4.1]{DV2008a}, there exists a unitary matrix
$\Tilde{V}_{XQ}$ and a perturbation $F_{XQ}$ such that
\[
\fl(X\hat{Q}) + F_{XQ} =
(X + \Delta X_0)\hat{Q} + F_{XQ} =
\hat{U}_X\hat{\Sigma}\Tilde{V}_{XQ}\herm
\]
and \(\lVert F_{XQ}(i, :)\rVert_2\leq
\epsJbar\lVert\fl(X\hat{Q})(i, :)\rVert_2\).
The unitary invariance of the \(2\)-norm implies
\begin{equation} \label{eq:prop-proof:normFQ}
\begin{split}
\lVert (F_{XQ}\Tilde{Q}\herm)(i,:)\rVert_2&=\lVert F_{XQ}(i,:)\rVert_2\\
&\leq\epsJbar\lVert\fl(X\hat{Q})(i,:)\rVert_2\\
&\leq\epsJbar\bigl(\lVert X(i,:)\rVert_2+\epsrrqr\lVert X(i,:)\rVert_2\bigr)\\
&\leq\epsJbar(1+\epsrrqr)\lVert X(i,:)\rVert_2
\end{split}
\end{equation}
for all \(i\).
Moreover, we have
\[
X + \Delta X_0 + F_{XQ}\Tilde{Q}\herm = \hat{U}_X\hat{\Sigma}\Tilde{V}_{X}\herm,
\]
where $\Tilde{V}_{X} = \Tilde{Q}\Tilde{V}_{XQ}$.
Let $F = \Delta X_0 + F_{XQ}\Tilde{Q}\herm$.
We conclude that~\eqref{eq:rowwise-bound} holds because~\eqref{eq:prop-proof:normDX0} and~\eqref{eq:prop-proof:normFQ} imply that
\[
\lVert F(i,:)\rVert_2 \leq \lVert \Delta X_0(i, :)\rVert_2 + \lVert (F_{XQ}\Tilde{Q}\herm)(i,:)\rVert_2 \leq \bigl(\epsrrqr + \epsJbar(1+\epsrrqr)\bigr)\lVert X(i,:)\rVert_2.
\qedhere
\]
\end{proof}

By replacing~\cite[Proposition~4.1]{DV2008a} with Proposition~\ref{prop:dsvj},
Algorithm~\ref{alg:msvj} fits into the framework of~\cite{DV2008a}---the rest
of the analysis in~\cite[Section~5]{DV2008a} remains valid.
Therefore, Algorithm~\ref{alg:msvj} also possesses nice high accuracy
properties \emph{in working precision}, though a lot of computations are
performed in lower precision.

However, note that this analysis can be valid for any unitary matrix \(Q\) and cannot illustrate why the one-sided Jacobi SVD algorithm without preconditioning can converge such quickly when using SVD in lower precision as the preconditioner.
Thus, in the next section, we analyze the efficiency of our new preconditioning strategy and show the advantage of using our special \(Q\) computed by~\ref{line:preproc-Q-start}--\ref{line:preproc-end} in Algorithm~\ref{alg:msvj}.

\section{Efficiency of the preconditioning}
\label{sec:accuracy_switch}
In Algorithm~\ref{alg:msvj}, after Step~\ref{line:old-start}, we obtain the
matrix \(X\).
Our new preconditioning strategy is as the following procedure:
\begin{align*}
& U\Sigma V\herm \gets X,\\
& B \gets X\herm U,\\
& QR \gets B,\\
& Y \gets XQ,
\end{align*}
where \(U\), \(V\), \(Q\in \mathbb{C}^{n\times n}\) are unitary matrices, and
the diagonal matrix \(\Sigma = \diag(\sigma_1, \sigma_2, \dotsc, \sigma_n)\)
satisfies \(\sigma_1\geq\sigma_2\geq\dotsb\geq\sigma_n\).
Note that only the first step is computed in lower precision.

Let \(\hat{U}\) and \(\hat{\Sigma}\) denote the computed result using
floating-point arithmetic by SVD of \(X\);
\(\hat{Q}\) and~\(\hat{R}\) denote the computed results using
floating-point arithmetic by the QR factorization of \(\fl(X\herm
\hat{U})\).
Taking rounding errors into account, the computed quantities satisfy
\begin{align}
\label{eq:pre_X}
& X = \hat{U}\hat{\Sigma} \Tilde{V}\herm + \Delta X\quad \text{with}\quad \lVert \Delta X\rVert_\fro\leq \epsSVD\lVert X\rVert_\fro\quad \text{and}\quad \lVert\hat{U}\herm \hat{U}-I\rVert_\fro\leq\epsU,\\
\label{eq:pre_B}
& \hat{B} := \fl(X\herm \hat{U}) = X\herm \hat{U} + \Delta(X\herm U)\quad \text{with}\quad \lVert \Delta(X\herm U)\rVert_\fro\leq \sqrt{n}\,\gamma_n\lVert\hat{U}\rVert_2\lVert X\rVert_\fro,\\
\label{eq:pre_Q}
& \hat{B} + \Delta B = \Tilde{Q}\hat{R}\quad \text{with}\quad \lVert \Delta B\rVert_\fro\leq \epsqr\lVert \hat{B}\rVert_\fro\quad \text{and}\quad \lVert\hat{Q}-\Tilde{Q}\lVert_\fro\leq\epsQ,\\
\label{eq:pre_Y}
& \hat{Y} := \fl(X\hat{Q})=X\hat{Q}+\Delta(X\hat{Q})\quad \text{with}\quad \lVert \Delta(X\hat{Q})\rVert_\fro\leq \sqrt{n}\,\gamma_n\lVert \hat{Q}\rVert_2\lVert X\rVert_\fro,
\end{align}
where \(\Tilde{V}\) and \(\Tilde{Q}\) are unitary matrices.
Moreover, if we use the one-sided Jacobi algorithm to compute SVD of X in lower precision,
\eqref{eq:pre_X} can be improved as, from~\cite[Proposition~4.1]{DV2008a},
\begin{align}
\label{eq:pre_XJacobi}
& X = \hat{U}\hat{\Sigma} \Tilde{V}\herm + \Delta X\quad \text{with}\quad \lVert \Delta X(i, :)\rVert_2\leq \epsSVD\lVert X(i, :)\rVert_2\quad \text{and}\quad \lVert\hat{U}\herm \hat{U}-I\rVert_\fro\leq\epsU.
\end{align}
In the above equations~\eqref{eq:pre_X}--\eqref{eq:pre_XJacobi}, \(\epsSVD\) and \(\epsU\) are functions of \(n\) and \(\machepslow\); \(\gamma_n:=n\macheps/(1-n\macheps)\), \(\epsqr\) and \(\epsQ\) are functions of \(n\) and \(\macheps\).
Equations~\eqref{eq:pre_B} and~\eqref{eq:pre_Y} follow from~\cite[Equation~3.13]{H2002} because
\begin{equation}
\lVert \Delta(X\herm U)\rVert_\fro\leq \gamma_n\lVert\hat{U}\rVert_\fro\lVert X\rVert_\fro
\leq \sqrt{n}\,\gamma_n\lVert\hat{U}\rVert_2\lVert X\rVert_\fro
\end{equation}
and it holds for \(\lVert \Delta(X\hat{Q})\rVert_\fro\) similarly.
Equation~\eqref{eq:pre_Q} is from~\cite[Theorem~19.4 and Equation~(19.13)]{H2002}.
From~\eqref{eq:pre_X}, we easily obtain
\begin{equation}
\label{eq:pre_U}
\lVert \hat{U}\rVert_2\leq 1+\epsU,\quad
\lVert \hat{U}\herm\hat{U}\rVert_2\leq 1+\epsU.
\end{equation}

In the refinement stage of our algorithm, we apply the one-sided Jacobi algorithm to the matrix~\(\hat{Y}\).
Thus, the speed of convergence of refinement depends on the orthogonality of \(\hat{Y}\),
i.e., when the columns of \(\hat{Y}\) are closer to orthogonal, the convergence of the Jacobi algorithm is faster.
Thus, we establish Theorem~\ref{thm:pre} to clarify how close \(\hat{Y}\) is
to orthogonality and to show the efficiency of our preconditioning.

\begin{theorem}
\label{thm:pre}
Assume that \eqref{eq:pre_X}--\eqref{eq:pre_XJacobi} hold together with
\begin{equation}
\label{eq:pre_assump}
\epsU\lVert\hat{\Sigma}\rVert_2\lVert\hat{\Sigma}^{-1}\rVert_2
+\sqrt{n}\,\bigl(\epsSVD+\sqrt{n}\,\gamma_n+\epsqr+\sqrt{n}\,\gamma_n\epsqr\bigr)\bigl(1+\epsU\bigr)
\lVert X\rVert_2\lVert\hat{\Sigma}^{-1}\rVert_2
<\sqrt{\frac32}-1.
\end{equation}
Then the matrix \(\hat{Y}\) obtained by~\eqref{eq:pre_X}--\eqref{eq:pre_Y} satisfies the following inequality:
\begin{align}
&\off(\hat{\Sigma}^{-1}\hat{Y}\herm\hat{Y}\hat{\Sigma}^{-1})\nonumber\\
:={}&\left(\sum_{i=1}^n\sum_{j=1, j\neq i}^n \lvert (\hat{\Sigma}^{-1}\hat{Y}
\herm\hat{Y}\hat{\Sigma}^{-1})_{ij}\rvert^2\right)^{1/2}\nonumber\\
\leq{}& \frac{2(\sqrt{2}+\sqrt{3}\,)+3}{1-\epsU}\epsU
+\frac{2(\sqrt{2}+\sqrt{3}\,)(1+\epsU)}{1-\epsU}\sqrt{n}\,\epsSVD\lVert
X\rVert_2\lVert\hat{\Sigma}^{-1}\rVert_2\nonumber\\
&+O(\macheps+\machepslow^2)\lVert X\rVert_2\lVert\hat{\Sigma}^{-1}\rVert_2.
\label{eq:pre-weak}
\end{align}

Moreover, if~\eqref{eq:pre_XJacobi} holds, then
\begin{align}
\off(\hat{\Sigma}^{-1}\hat{Y}\herm\hat{Y}\hat{\Sigma}^{-1})
\leq{}& \frac{2(\sqrt{2}+\sqrt{3}\,)+3}{1-\epsU}\epsU
+\frac{2(\sqrt{2}+\sqrt{3}\,)(1+\epsU)}{1-\epsU}\sqrt{n}\,\epsSVD\lVert Z_r^{-1}\rVert_2
\nonumber\\
&+O(\macheps+\machepslow^2)\lVert X\rVert_2\lVert\hat{\Sigma}^{-1}\rVert_2
\label{eq:pre-strong}
\end{align}
with \(X = D_rZ_r\), where \(D_r\) is a diagonal matrix and each row of \(Z_r\)
is a unit vector, i.e., \(Z_r\) consists of normalized rows of \(X\).
\end{theorem}

\begin{remark}
As shown in~\cite[Proposition~4.1]{DV2008a}, the computed SVD result of the Jacobi algorithm
has rowwise backward stability.
This implies that its accuracy depends primarily on the relative condition number
of \(Z_r\) and maintains the validity of~\eqref{eq:pre_XJacobi}.
Conversely, for the divide and conquer algorithm, compliance with \eqref{eq:pre_XJacobi} is typically not achieved.
Note that~\eqref{eq:pre-strong} necessitates~\eqref{eq:pre_XJacobi} in Theorem~\ref{thm:pre}.
Thus, the upper bound of \(\off(\hat{\Sigma}^{-1}\hat{Y}\herm\hat{Y}\hat{\Sigma}^{-1})\)
is primarily influenced by the condition number of \(X\) if we use the divide and conquer algorithm.
If we use the Jacobi algorithm, it mainly depends on the relative condition number
of \(Z_r\), which is usually much smaller than the condition number of \(X\).
Thus, the matrix \(Q\) computed by the Jacobi algorithm is more accurate.
This is the reason why we do not use the divide and conquer algorithm in our
mixed precision algorithm.
\end{remark}

To prove Theorem~\ref{thm:pre}, we first establish Lemma~\ref{lemma:hatY} to
reformulate~\eqref{eq:pre_Y}.
We use \(\up(\cdot)\) and \(\low(\cdot)\), respectively, to denote the strict
upper and lower triangular parts plus half of the diagonal part of the matrix.

\begin{lemma}
\label{lemma:hatY}
Assume that \eqref{eq:pre_X}--\eqref{eq:pre_Y} hold.
Let \(E_1=\Delta(X\herm U)+\Delta B\) and \(E_Y=-\hat{U}\iherm
E_1\herm\Tilde{Q}+X(\hat{Q}-\Tilde{Q})+\Delta(X\hat{Q})\).
Then
\begin{equation}
\label{eq:pre_finalY}
\hat{Y} = \hat{U}\iherm\hat{R}\herm+E_Y
\end{equation}
with
\begin{equation}
\label{eq:pre_normE1EY}
\begin{aligned}
\lVert E_1\rVert_\fro\leq \sqrt{n}\,\bigl(\sqrt{n}\,\gamma_n(1+\epsqr)+\epsqr\bigr)(1+\epsU)\lVert X\rVert_2
\quad\text{and}\quad\lVert E_Y\rVert_\fro
&\leq O(\macheps)\lVert X\rVert_2,
\end{aligned}
\end{equation}
where \(\hat{R}\) satisfies
\begin{subequations}
\begin{align}
\Tilde{Q}\hat{R}
&=\Tilde{V}\hat{\Sigma}+\Tilde{V}\hat{\Sigma}E_U+\Delta X\herm\hat{U}+E_1\label{eq:pre_QR1-0}\\
&=\Tilde{V}\hat{\Sigma}\bigl(I+\up(E_U)\bigr)+\Tilde{V}\hat{\Sigma}\low(E_U)+\Delta X\herm\hat{U}+E_1.\label{eq:pre_QR1-1}
\end{align}
\end{subequations}
\end{lemma}

\begin{proof}
By~\eqref{eq:pre_Y}, we have
\begin{equation}
\label{eq:pre_middleY}
\begin{aligned}
\hat{Y} &= X\Tilde{Q}+X(\hat{Q}-\Tilde{Q})+\Delta(X\hat{Q}).
\end{aligned}
\end{equation}
Here \(\Tilde{Q}\) is computed by the QR factorization of \(\hat{B}\), i.e.,
\begin{equation}
\label{eq:pre_QR0}
X\herm\hat{U}+E_1=\Tilde{Q}\hat{R},
\end{equation}
and \(E_1\) satisfies
\begin{equation}
\label{eq:pre_E1}
\begin{aligned}
\lVert E_1\rVert_F
&\leq \lVert\Delta(X\herm U)\rVert_\fro
+\lVert\Delta B\rVert_\fro\\
&\leq \lVert\Delta(X\herm U)\rVert_\fro
+\epsqr\bigl(\lVert X\herm \hat{U}\rVert_\fro+\lVert\Delta(X\herm U)\lVert_\fro\bigr)\\
&\leq \sqrt{n}\,\bigl(\sqrt{n}\,\gamma_n(1+\epsqr)+\epsqr\bigr)(1+\epsU)\lVert X\rVert_2.
\end{aligned}
\end{equation}
Furthermore, by~\eqref{eq:pre_QR0}, we have
\begin{equation*}
X\Tilde{Q}=\hat{U}\iherm\hat{R}\herm-\hat{U}\iherm E_1\herm\Tilde{Q}.
\end{equation*}
Combined with~\eqref{eq:pre_middleY}, we obtain~\eqref{eq:pre_finalY}
and~\eqref{eq:pre_normE1EY} because
\begin{equation}
\label{eq:pre_normEY}
\begin{aligned}
\lVert E_Y\rVert_\fro
&\leq (1+\epsU)\lVert E_1\rVert_\fro+\epsQ\lVert X\rVert_2+\sqrt{n}\,\gamma_n\lVert X\rVert_2\lVert \hat{Q}\rVert_\fro
\leq O(\macheps)\lVert X\rVert_2.
\end{aligned}
\end{equation}

Note that \(\hat{R}\) is the computed \(R\)-factor in the QR factorization of
\(X\herm\hat{U}+E_1\) by~\eqref{eq:pre_QR0}.
Then
\begin{align*}
\Tilde{Q}\hat{R}
&=X\herm\hat{U}+E_1\\
&=\Tilde{V}\hat{\Sigma}+\Tilde{V}\hat{\Sigma}E_U+\Delta X\herm\hat{U}+E_1\\
&=\Tilde{V}\hat{\Sigma}\bigl(I+\up(E_U)\bigr)+\Tilde{V}\hat{\Sigma}\low(E_U)+\Delta X\herm\hat{U}+E_1.
\qedhere
\end{align*}
\end{proof}

From the proof of Lemma~\ref{lemma:hatY}, the matrix \(\hat{R}\) is also the
\(R\)-factor in the QR factorization of
\(\Tilde{V}\herm\bigl(X\herm\hat{U}+E_1\bigr)\).
Then, according to~\eqref{eq:pre_QR1-0}, we have
\begin{equation}
\label{eq:pre_QR2}
\Tilde{V}\herm \Tilde{Q}\hat{R}=\hat{\Sigma}+F_1
\end{equation}
with \(F_1=\hat{\Sigma}E_U+\Tilde{V}\herm \Delta X\herm\hat{U}
+\Tilde{V}\herm E_1\).
Equation~\eqref{eq:pre_QR2} can be viewed as a perturbed QR factorization.
The following Lemma~\ref{lemma:perturQR}, which is derived
from~\cite[Theorem~6.1]{CP2001}, provides a useful tool to handle the QR
factorization of a perturbed matrix.

\begin{lemma}
\label{lemma:perturQR}
Let \(X\in\mathbb{R}^{n\times n}\) be of full column rank.
The QR factorization of \(X\) is \(X=QR\).
Suppose that \(F\in\mathbb{R}^{n\times n}\) satisfies
\(\bigl\lVert \lvert X\rvert\cdot\lvert F\rvert\bigr\rVert_2< 1\) and
\(\lVert FR^{-1} \rVert<\sqrt{3/2}-1\).
Then \(X+F\) has the unique QR factorization
\[
X+F=\bigl(Q+\Delta Q\bigr)\bigl(R+\Delta R\bigr)
\]
with
\[
\lVert \Delta RR^{-1}\rVert_\fro\leq (\sqrt{2}+\sqrt{3}\,)\lVert FR^{-1}\rVert_\fro.
\]

Moreover, if \(Q=I\) and \(RR_D^{-1}\) is diagonal, then
\[
\lVert \Delta R\rVert_\fro\leq 8\lVert F\rVert_\fro\lVert R_D^{-1}\rVert_2\lVert R_D\rVert_2.
\]
\end{lemma}

\begin{proof}
The first statement is a direct consequence of~\cite[Theorem~6.1]{CP2001}.
Thus, we only need to prove the second one.

Let \(D=RR_D^{-1}\).
According to~\cite[Section~6]{CP2001}, we have
\begin{equation*}
\Delta RR_D^{-1}=\up\bigl(FR_D^{-1}+D^{-1}R_D\iherm F\herm D +D^{-1}R_D\iherm(F\herm F-\Delta R\herm \Delta R)R_D^{-1}\bigr).
\end{equation*}
Combined with the first statement, it holds that
\begin{align*}
&\lVert\Delta RR_D^{-1}\rVert_\fro\\
\leq{}&\lVert FR_D^{-1}\rVert_\fro+\lVert D^{-1}\up(R_D\iherm F\herm)D\rVert_\fro
+\lVert FR_D^{-1}D^{-1}\rVert_\fro\lVert FR_D^{-1}\rVert_\fro
+\lVert \Delta RR_D^{-1}D^{-1}\rVert_\fro\lVert \Delta RR_D^{-1}\rVert_\fro\\
\leq{}& 2\lVert FR_D^{-1}\rVert_\fro
+\lVert FR_D^{-1}D^{-1}\rVert_\fro\lVert FR_D^{-1}\rVert_\fro
+(\sqrt{2}+\sqrt{3}\,)\lVert FR_D^{-1}D^{-1}\rVert_\fro\lVert \Delta RR_D^{-1}\rVert_\fro.
\end{align*}
Under the assumption \(\lVert FR^{-1}\rVert_\fro\leq\sqrt{3/2}-1\),
\(\lVert\Delta RR_D^{-1}\rVert_\fro\) can be bounded by
\begin{equation*}
\lVert\Delta RR_D^{-1}\rVert_\fro
\leq \frac{2+\lVert FR^{-1}\rVert_\fro}{1-(\sqrt{2}+\sqrt{3}\,)\lVert FR^{-1}\rVert_\fro}\lVert FR_D^{-1}\rVert_\fro
< 8\lVert FR_D^{-1}\rVert_\fro
\leq 8\lVert F\rVert_\fro\lVert R_D^{-1}\rVert_2
\end{equation*}
and \(\lVert\Delta R\rVert_\fro\leq\lVert\Delta RR_D^{-1}\rVert_\fro\lVert R_D\rVert_2\).
This completes the proof.
\end{proof}


With the aid of Lemmas~\ref{lemma:hatY} and~\ref{lemma:perturQR}, we are ready
to prove Theorem~\ref{thm:pre}.
\begin{proof}[Proof of Theorem~\ref{thm:pre}]
By~\eqref{eq:pre_QR1-1}, we have
\begin{equation}
\label{eq:pre_QR3}
\begin{aligned}
\hat{\Sigma}\bigl(I+\up(E_U)\bigr)+\hat{\Sigma}\low(E_U)+\Tilde{V}\herm\Delta X\herm\hat{U}+\Tilde{V}\herm E_1
&=\Tilde{V}\herm\Tilde{Q}\hat{R}
=(Q+\Delta Q)(R+\Delta R)
\end{aligned}
\end{equation}
with \(Q=I\) and \(R=\hat{\Sigma}\bigl(I+\up(E_U)\bigr)\).
Similarly to~\eqref{eq:pre_offY}, we conclude by~\eqref{eq:pre_finalY}
and~\eqref{eq:pre_QR3} that
\begin{equation}
\begin{aligned}
\label{eq:pre_off}
\off(\hat{\Sigma}^{-1}\hat{Y}\herm\hat{Y}\hat{\Sigma}^{-1})
\leq{}& \frac{1}{1-\epsU}\bigl(\lVert \hat{\Sigma}^{-1}\Delta R\rVert_\fro^2+2(1+\epsU)\lVert \hat{\Sigma}^{-1}\Delta R\rVert_\fro\bigr)+2\epsU+\frac{\epsU}{1-\epsU}
+O(\machepslow^2)\\
&+2(1+\epsU)\bigl(1+\epsU+\lVert \hat{\Sigma}^{-1}\Delta R\rVert_\fro\bigr)\lVert E_Y\rVert_\fro\lVert\hat{\Sigma}^{-1}\rVert_2
+\lVert E_Y\rVert_\fro^2\lVert\hat{\Sigma}^{-1}\rVert_2^2.
\end{aligned}
\end{equation}
Then we only need to estimate \(\lVert \hat{\Sigma}^{-1}\Delta R\rVert_\fro\).

Since the diagonal elements of \(\hat{\Sigma}\) are in descending order and
\(\Delta R\) is an upper triangular matrix, we have
\begin{equation}
\label{eq:pre_SigmaDeltaR}
\lVert \hat{\Sigma}^{-1}\Delta R\rVert_\fro
\leq \lVert \hat{\Sigma}^{-1}\Delta R\hat{\Sigma}\hat{\Sigma}^{-1}\rVert_\fro
\leq \lVert \Delta R\hat{\Sigma}^{-1}\rVert_\fro.
\end{equation}
The relationship between \(\Delta RR^{-1}\) and \(\Delta R\hat{\Sigma}^{-1}\) is
\begin{equation*}
\Delta RR^{-1}
=\Delta R\bigl(I+\up(E_U)\bigr)^{-1}\hat{\Sigma}^{-1}
=\Delta R\hat{\Sigma}^{-1}+\Delta R\bigl(\bigl(I+\up(E_U)\bigr)^{-1}-I\bigr)\hat{\Sigma}^{-1}.
\end{equation*}
Combined with~\eqref{eq:pre_SigmaDeltaR}, it follows that
\begin{equation}
\lVert \hat{\Sigma}^{-1}\Delta R\rVert_\fro
\leq \lVert \Delta RR^{-1}\rVert_\fro+\frac{\epsU}{1-\epsU}\lVert\Delta R\rVert_\fro\lVert \hat{\Sigma}^{-1}\rVert_2
\end{equation}
Note that~\eqref{eq:pre_QR3} and the assumption~\eqref{eq:pre_assump} allow us
to apply Lemma~\ref{lemma:perturQR} to bound \(\lVert \Delta R\rVert_\fro\)
and \(\lVert \Delta RR^{-1}\rVert_\fro\), i.e.,
\begin{align*}
\lVert \Delta R\rVert_\fro
&\leq 8\frac{1+\epsU}{1-\epsU}\lVert \hat{\Sigma}\low(E_U)+\Tilde{V}\herm\Delta X\herm\hat{U}+\Tilde{V}\herm E_1\rVert_\fro\\
&\leq 8\frac{1+\epsU}{1-\epsU}\bigl(\lVert \hat{\Sigma}\low(E_U)\rVert_\fro+\lVert\Delta X\herm\hat{U}\rVert_\fro+\lVert E_1\rVert_\fro\bigr)\\
&\leq 8\frac{1+\epsU}{1-\epsU}\bigl(\epsU\lVert\hat{\Sigma}\rVert_2+\sqrt{n}(1+\epsU)\bigl(\epsSVD+\sqrt{n}\gamma_n+\epsqr+\sqrt{n}\gamma_n\epsqr\bigr)\lVert X\rVert_2\bigr)
\end{align*}
and
\begin{align*}
\lVert \Delta RR^{-1}\rVert_\fro
&\leq (\sqrt{2}+\sqrt{3}\,)\lVert \bigl(\hat{\Sigma}\low(E_U)+\Tilde{V}\herm\Delta X\herm\hat{U}+\Tilde{V}\herm E_1\bigr)\hat{\Sigma}^{-1}\rVert_\fro\\
&\leq (\sqrt{2}+\sqrt{3}\,)\bigl(\lVert \hat{\Sigma}\low(E_U)\hat{\Sigma}^{-1}\rVert_\fro+\lVert\Delta X\herm\hat{U}\hat{\Sigma}^{-1}\rVert_\fro+\lVert E_1\hat{\Sigma}^{-1}\rVert_\fro\bigr)\\
&\leq (\sqrt{2}+\sqrt{3}\,)\bigl(\epsU+\lVert\Delta X\herm\hat{U}\hat{\Sigma}^{-1}\rVert_\fro+O(\macheps)\lVert X\rVert_2\lVert\hat{\Sigma}^{-1}\rVert_2\bigr).
\end{align*}
It remains to estimate
\(\lVert\Delta X\herm\hat{U}\hat{\Sigma}^{-1}\rVert_\fro\).
By~\eqref{eq:pre_X}, we obtain
\begin{align*}
\lVert\Delta X\herm\hat{U}\hat{\Sigma}^{-1}\rVert_\fro
&\leq \sqrt{n}\,\epsSVD(1+\epsU)\lVert X\rVert_2\lVert\hat{\Sigma}^{-1}\rVert_2.
\end{align*}
Combined with~\eqref{eq:pre_off}, we conclude~\eqref{eq:pre-weak}.

Further, if~\eqref{eq:pre_XJacobi} holds, we can obtain
\begin{align*}
\lVert\Delta X\herm\hat{U}\hat{\Sigma}^{-1}\rVert_\fro
&=\lVert\hat{\Sigma}^{-1}\hat{U}\herm\Delta X\rVert_\fro\\
&=\lVert\hat{\Sigma}^{-1}\bigl(\hat{U}^{-1}\hat{U}\bigr)\hat{U}\herm\Delta X\rVert_\fro\\
&\leq \lVert\Tilde{V}\hat{\Sigma}^{-1}\hat{U}^{-1}\Delta X\rVert_\fro
+\lVert \hat{\Sigma}^{-1}\hat{U}^{-1}E_U\Delta X\rVert_\fro\\
&\leq \lVert \bigl(I-X^{-1}\Delta X\bigr)X^{-1}\Delta X\rVert_\fro
+O(\machepslow\macheps)\lVert X\rVert_2\lVert\hat{\Sigma}^{-1}\rVert_2
\end{align*}
Together with
\[
\lVert X^{-1}\Delta X\rVert_\fro\leq\sqrt{n}\,\epsSVD\lVert Z_r^{-1}\rVert_2,
\]
we have
\[
\lVert\Delta X\herm\hat{U}\hat{\Sigma}^{-1}\rVert_\fro
\leq \sqrt{n}\,\epsSVD\lVert Z_r^{-1}\rVert_2+O(\machepslow^2+\macheps)\lVert X\rVert_2\lVert\hat{\Sigma}^{-1}\rVert_2.
\]
Combined with~\eqref{eq:pre_off}, we arrive at~\eqref{eq:pre-strong} because
\begin{equation}
\label{eq:norm_sigmainv_deltaR}
\lVert \hat{\Sigma}^{-1}\Delta R\rVert_\fro
\leq(\sqrt{2}+\sqrt{3}\,)\bigl(\epsU+\sqrt{n}\,\epsSVD\lVert Z_r^{-1}\rVert_2\bigr)+O(\machepslow^2+\macheps)\lVert X\rVert_2\lVert\hat{\Sigma}^{-1}\rVert_2.
\end{equation}
\end{proof}

The effectiveness of our mixed precision Jacobi SVD algorithm is largely based
on the quadratic convergence of the Jacobi algorithm.
Our mixed precision Jacobi SVD algorithm also inherits this property under
mild assumptions.
In order to show this, we first derive the following theorem.

\begin{theorem}
\label{thm:pre_simple}
Assume that \eqref{eq:pre_X}--\eqref{eq:pre_Y} hold together with
\begin{equation}
\label{eq:pre_assump_simple}
\epsU\lVert\hat{\Sigma}\rVert_2\lVert\hat{\Sigma}^{-1}\rVert_2
+\sqrt{n}\,\bigl(\epsSVD+\sqrt{n}\,\gamma_n+\epsqr+\sqrt{n}\,\gamma_n\epsqr\bigr)(1+\epsU)
\lVert X\rVert_2\lVert\hat{\Sigma}^{-1}\rVert_2
<\sqrt{\frac32}-1.
\end{equation}
Then the matrix \(\hat{Y}\) obtained by~\eqref{eq:pre_Y} satisfies
\[
\off(\hat{Y}\herm\hat{Y})
:=\biggl(\sum_{i=1}^n\sum_{j=1, j\neq i}^n \lvert (\hat{Y}\herm\hat{Y})_{ij}
\rvert^2\biggr)^{1/2}
\leq\delta,
\]
where
\begin{equation}
\label{eq:delta}
\delta=\frac{17\epsU}{1-\epsU}\lVert\hat{\Sigma}\rVert_2^2
+\frac{16(1+\epsU)}{1-\epsU}\sqrt{n}\,\epsSVD\lVert X\rVert_2\lVert\hat{\Sigma}\rVert_2
+O(\macheps+\machepslow^2)\lVert X\rVert_2\lVert\hat{\Sigma}\rVert_2.
\end{equation}
\end{theorem}

\begin{proof}
By~\eqref{eq:pre_finalY} and~\eqref{eq:pre_QR2}, we have
\begin{align}
\off(\hat{Y}\herm\hat{Y})
\leq{}& \off\bigl(\hat{R}\hat{U}^{-1}\hat{U}\iherm\hat{R}\herm\bigr)
+2(1+\epsU)\lVert \hat{R}\rVert_2\lVert E_Y\rVert_\fro+
\lVert E_Y\rVert_\fro^2\nonumber\\
\leq{}& \off\bigl((\Delta R+\hat{\Sigma})(I+E_U)^{-1}(\Delta R\herm+\hat{\Sigma})\bigr)
+2(1+\epsU)\lVert \hat{R}\rVert_2\lVert E_Y\rVert_\fro+
\lVert E_Y\rVert_\fro^2\nonumber\\
\leq{}& \frac{1}{1-\epsU}\bigl(\lVert \Delta R\rVert_\fro^2
+2\lVert \Delta R\rVert_\fro\lVert\hat{\Sigma}\rVert_2\bigr)
+\frac{\epsU}{1-\epsU}\lVert\hat{\Sigma}\rVert_2^2\nonumber\\
&+2(1+\epsU)\bigl(\lVert \hat{\Sigma}\rVert_2+\lVert \Delta R\rVert_\fro\bigr)
\lVert E_Y\rVert_\fro+\lVert E_Y\rVert_\fro^2.
\label{eq:pre_offY}
\end{align}
Then we only need to estimate \(\lVert \Delta R\rVert_\fro\).
By~\eqref{eq:pre_E1}, it follows that
\begin{align*}
\lVert \Delta R\rVert_\fro
&\leq 8\lVert F_1\rVert_\fro\\
&\leq 8\bigl(\epsU\lVert \hat{\Sigma}\rVert_2+(1+\epsU)\epsSVD\lVert X\rVert_2+\lVert E_1\rVert_\fro\bigr)\\
&\leq 8\epsU\lVert \hat{\Sigma}\rVert_2+8(1+\epsU)\epsSVD\lVert X\rVert_2+O(\macheps)\lVert X\rVert_2.
\end{align*}
Substituting this estimate into~\eqref{eq:pre_offY} yields the conclusion.
\end{proof}

Using Theorem~\ref{thm:pre_simple} and the theory developed in~\cite{VK1966},
we obtain Theorem~\ref{thm:quadratic-convergence} on quadratic convergence.
Although rounding error is not taken into account,
Theorem~\ref{thm:quadratic-convergence} still provides an explanation for the
rapid convergence of our mixed precision Jacobi SVD algorithm after switch
from lower precision back to working precision.

\begin{theorem}
\label{thm:quadratic-convergence}
Let \(Y^{(k)}\) be the matrix \(\hat Y\) after \(k\) Jacobi sweeps.
If
\[
4\sqrt{2}\,\delta< d\leq \min_{\sigma_i(Y^{(k)})\neq \sigma_j(Y^{(k)})}
\bigl\lvert \sigma_i^2(Y^{(k)})-\sigma_j^2(Y^{(k)})\bigr\rvert,
\]
where \(\delta\) is defined in~\eqref{eq:delta}, then
\[
\off\bigl((Y^{(k+1)}\bigr)\herm Y^{(k+1)})
\leq \frac{\sqrt{34}}{3d}\bigl(\off\bigl((Y^{(k)})\herm Y^{(k)}\bigr)\bigr)^2.
\]
\end{theorem}

\section{Efficiency of the method to switch precisions}
\label{sec:analysis_efficiency_switch}
In the discussion of Section~\ref{subsec:switch}, we have mentioned two
methods to switch precisions:
one is
\[
B \gets X\herm U,\quad Q_1R_1 \gets B,\quad Y_1 \gets XQ_1;
\]
the other is
\[
Q_2R_2 \gets V,\quad Y_2 \gets XQ_2,
\]
where \(U\) and \(V\) satisfy \(X = U\Sigma V\herm\).
Although mathematically we have \(Q_1=Q_2\) and \(Y_1=Y_2\) in exact arithmetic,
the computed results are different if rounding errors are taken
into account.
We use subscripts to represent computed results by different methods.

In practice the second method is more costly because \(V\) needs to be
accumulated.
Hence the first method is preferred from a performance viewpoint.
In the following we show that \(Y_1\) is better than~\(Y_2\) in finite
precision arithmetic, in the sense that the columns of \(Y_1\) are more
orthogonal.

Let us regard the result computed in working precision as the accurate one.
Suppose the computed quantities computed by these two methods satisfy
\begin{align}
\label{eq:Y1}
X\herm\hat{U}=Q_1R_1,\quad Y_1=XQ_1,
\end{align}
and
\begin{align}
\label{eq:Y2}
Y_2=X\Tilde{V},
\end{align}
respectively, where \(\hat{U}\) and \(\Tilde{V}\) satisfy~\eqref{eq:pre_XJacobi}.
Theorem~\ref{thm:effciency_switch} characterizes the orthogonality of \(Y_1\)
and \(Y_2\) after normalizing the columns.

\begin{theorem}
\label{thm:effciency_switch}
Under the assumptions that~\eqref{eq:pre_XJacobi}, \eqref{eq:pre_assump},
\eqref{eq:Y1} and~\eqref{eq:Y2}, it holds that
\[
\lVert Y_1\hat{\Sigma}^{-1}-\hat{U}\rVert_2\leq\epsU+(1+\epsU)\bigl(\epsU+(\sqrt{2}+\sqrt{3}\,)\bigl(\epsU+\sqrt{n}\,\epsSVD\lVert Z_r^{-1}\rVert_2\bigr)\bigr)
\]
and
\[
\lVert Y_2\hat{\Sigma}^{-1}-\hat{U}\rVert_2\leq n\epsSVD\lVert X\rVert_2\lVert\hat{\Sigma}^{-1}\rVert_2.
\]
\end{theorem}

\begin{proof}
First we analyze \(Y_1\) computed by the first method.
By Lemma~\ref{lemma:hatY} and the assumptions, we easily obtain
\begin{align*}
Y_1 &= \hat{U}\iherm \hat{R}\herm
=\hat{U}\iherm \bigl(I+\up(E_U)\herm\bigr)\hat{\Sigma}+\hat{U}\iherm\Delta R\herm
=\hat{U}\iherm\bigl(I+\up(E_U)\herm+\Delta R\herm\hat{\Sigma}^{-1}\bigr)\hat{\Sigma}
\end{align*}
with \(\Delta R=\hat{R}-\hat{\Sigma}\bigl(I+\up(E_U)\bigr)\).
Similarly to~\eqref{eq:norm_sigmainv_deltaR}, we draw the first conclusion
because
\begin{align*}
\lVert Y_1(:, i)\rVert_2
&\leq \lVert \hat{U}\iherm\rVert_2\bigl\lVert I+\up(E_U)\herm+\Delta R\herm\hat{\Sigma}^{-1}\bigr\rVert_2\lVert \hat{\Sigma}(:, i)\rVert_2\\
&\leq (1+\epsU)\bigl(1+\epsU+(\sqrt{2}+\sqrt{3}\,)\bigl(\epsU+\sqrt{n}\,\epsSVD\lVert Z_r^{-1}\rVert_2\bigr)\bigr)\lVert\hat{\Sigma}(:, i)\rVert_2.
\end{align*}

Considering \(Y_2\) computed by the second method, we have
\[
Y_2=X\Tilde{V}=\hat{U}\hat{\Sigma}+\Delta X\Tilde{V}
\]
and then
\[
Y_2\hat{\Sigma}^{-1}=\hat{U}+\Delta X\Tilde{V}\hat{\Sigma}^{-1}.
\]
Together with
\[
\lVert \Delta X\Tilde{V}\hat{\Sigma}^{-1}\rVert_2
\leq n\epsSVD\lVert X\rVert_2\lVert\hat{\Sigma}^{-1}\rVert_2,
\]
we conclude the proof.
\end{proof}

Theorem~\ref{thm:effciency_switch} shows that both \(Y_1\hat{\Sigma}^{-1}\)
and \(Y_2\hat{\Sigma}^{-1}\) are close to have orthogonal columns.
However, because \(\lVert Z_r^{-1}\rVert_2\) is often much smaller than
\(\lVert X\rVert_2\lVert\hat{\Sigma}^{-1}\rVert_2\), the columns of \(Y_1\)
are thus more orthogonal compared to those of \(Y_2\).
Therefore, the first method is preferred both computationally and numerically.

\section{Numerical Experiments}
\label{sec:experiments}
In this section we present numerical experiments on the x86-64 architecture of
our mixed precision algorithm described in Algorithm~\ref{alg:msvj}.
We choose IEEE double precision as the working precision, and IEEE
single precision as the lower precision.
Our implementation is derived from \texttt{XGEJSV} and \texttt{XGESVJ} in
LAPACK version 3.9.0, and makes use of \texttt{XGEQRF}, \texttt{XGEQP3},
\texttt{xGESVJ}, and \texttt{xGESVD} for the computations in single precision,
where \(\mathtt{X}\in\set{\mathtt{D},\mathtt{Z}}\) and
\(\mathtt{x}\in\set{\mathtt{S},\mathtt{C}}\).
All experiments have been performed on a Linux cluster with two twelve-core
Intel Xeon E5-2670 v3 2.30~GHz CPUs and 128~GB of main memory.
Programs are compiled using the GNU Fortran compiler version 6.3.1 with
optimization flag \texttt{-O2}, and are linked with the OpenBLAS library
version~0.3.9.

\subsection{Experiment settings}
In the following, we compare Algorithm~\ref{alg:msvj} with the double
precision solver \texttt{XGEJSV} in LAPACK.
\texttt{XGEJSV} is a good implementation of Algorithm~\ref{alg:presvj},
which mainly consists of two stages: QR preconditioning and the
one-sided Jacobi SVD algorithm (i.e., Algorithm~\ref{alg:svj}).
In LAPACK, Algorithm~\ref{alg:svj} is implemented in \texttt{XGESVJ}.

All left and right singular vectors are explicitly requested in these solvers.
We report the relative run time, which is the ratio of the wall clock time of
a solver (or its components) over that consumed by \texttt{XGEJSV}.
There are several tunable parameters required by Algorithm~\ref{alg:msvj}.
In the following experiments, we choose these parameters as follows:
\(\tolalg=10^{-2}\), \(\tolorth=10^{-5}\), \(\tolcond=1.5\cdot n^{1/4}\).

In performance plots, the labels `QR pre', `xGESVD\slash{}xGESVJ',
`switch' and `XGESVJ', respectively, represent the four components of
Algorithm~\ref{alg:msvj}: QR preconditioning, the single precision SVD
algorithm as a preconditioning, switching precisions, and the double
precision one-sided Jacobi SVD algorithm as a refinement.
Moreover, the numbers in the bar mean the number of sweeps performed by the
one-sided Jacobi SVD algorithm (in the working precision).

Similar to~\cite{DV2008b}, the test matrices are randomly generated matrices
of the form \(A=BD\), where $B$ is with unit column norms and $D$ is a
diagonal matrix.
We can prescribe the condition numbers of \(D\) and \(B\) as follows.
We first use \(\texttt{XLATM1}\) to generate two diagonal matrices \(D\)
and~\(\Sigma\) with given condition numbers \(\kappa(D)\) and
\(\kappa(\Sigma)\), respectively.
Then the matrix \(B\) is constructed through \(B=W_1\Sigma W_2W_3\), where
\(W_1\) and \(W_2\) are unitary factors in the QR factorization of randomly
generated matrices, and \(W_3\) is carefully chosen so that \(B\) has unit
column norms~\cite{CL1983,DH2000}.

In our experiments, we test \(16\) combinations of \(\kappa(D)\) and
\(\kappa(B)\) as shown in Table~\ref{tab:testmat}.
The parameter \texttt{MODE} is used by \texttt{XLATM1} to specify different
types of test matrices; see Table~\ref{tab:XLATM1} for details.
Both square and rectangular matrices are tested.
For square matrices, we choose two typical cases:
\(\bigl(\kappa(D),\kappa(B)\bigr)=(10^2,10^{12})\) and
\(\bigl(\kappa(D),\kappa(B)\bigr)=(10^{20},10^2)\);
for rectangular matrices, we only test the case
\(\bigl(\kappa(D),\kappa(B)\bigr)=(10^2,10^{12})\).

\begin{table}[!tb]
\centering
\caption{The description of different types of test matrices.}
\label{tab:testmat}
\begin{tabular}{|c|c|c|c|c|c|c|c|c|c|c|c|c|c|c|c|c|}
\hline
ID of matrices& 1& 2& 3& 4& 5& 6& 7& 8& 9& 10& 11&
12& 13& 14& 15& 16\\
\hline
\texttt{MODE} of \(D\)& 1& 1& 1& 1& 2& 2& 2& 3& 3& 3& 4&
4& 4& 5& 5& 5\\
\hline
\texttt{MODE} of \(\Sigma\)& 2& 3& 4& 5& 3& 4& 5& 2& 4& 5& 2&
3& 5& 2& 3& 4\\
\hline
\end{tabular}
\end{table}

\begin{table}[!tb]
\centering
\caption{Explanation of different modes of \texttt{XLATM1} for generating
\(D=\diag(d)\).}
\label{tab:XLATM1}
\begin{tabular}{cp{0.8\textwidth}}
\hline
\texttt{MODE} & \hfil Description\hfil \\
\hline
1 & clustered at $1/\kappa$, where $d(1) = 1$, $d(2:n) = 1/\kappa$ \\
2 & clustered at $1$, where $d(1:n-1) = 1$, $d(n) = 1/\kappa$ \\
3 & Geometric distribution, where $d(i) = 1/\kappa^{(i-1)/(n-1)}$ \\
4 & Arithmetic distribution, where $d(i) = 1 - (i-1)/(n-1)(1 - 1/\kappa)$ \\
5 & Log-random distribution, where $d(i)\in(1/\kappa,1)$ so that their logarithms are random and uniformly
distributed \\
\hline
\end{tabular}
\end{table}

\subsection{Performance tests of the basic mixed precision Jacobi SVD algorithm}
\label{subsec:experiments-simplemsvj}
We first show the performance of the basic mixed precision algorithm
(Algorithm~\ref{alg:basic-msvj}) in Figure~\ref{fig:msvj_simple}.
It can be seen that Algorithm~\ref{alg:basic-msvj} only can achieve about
\(1.1\times\)--\(1.4\times\) speedup compared to \texttt{DGEJSV}.
This is far below the ideal speedup on the x86-64 architecture, which is
\(2\times\).
In fact, we can see from Figure~\ref{fig:msvj_simple} that the computation in
single precision does not effectively accelerate the convergence in double
precision because the refinement stage (Line~\ref{line:basic-msvj-refinement})
still requires too many Jacobi iterations to converge.
Therefore, Algorithm~\ref{alg:basic-msvj} is not very satisfactory.

\begin{figure}[!tb]
\centering
\includegraphics[width=\textwidth]{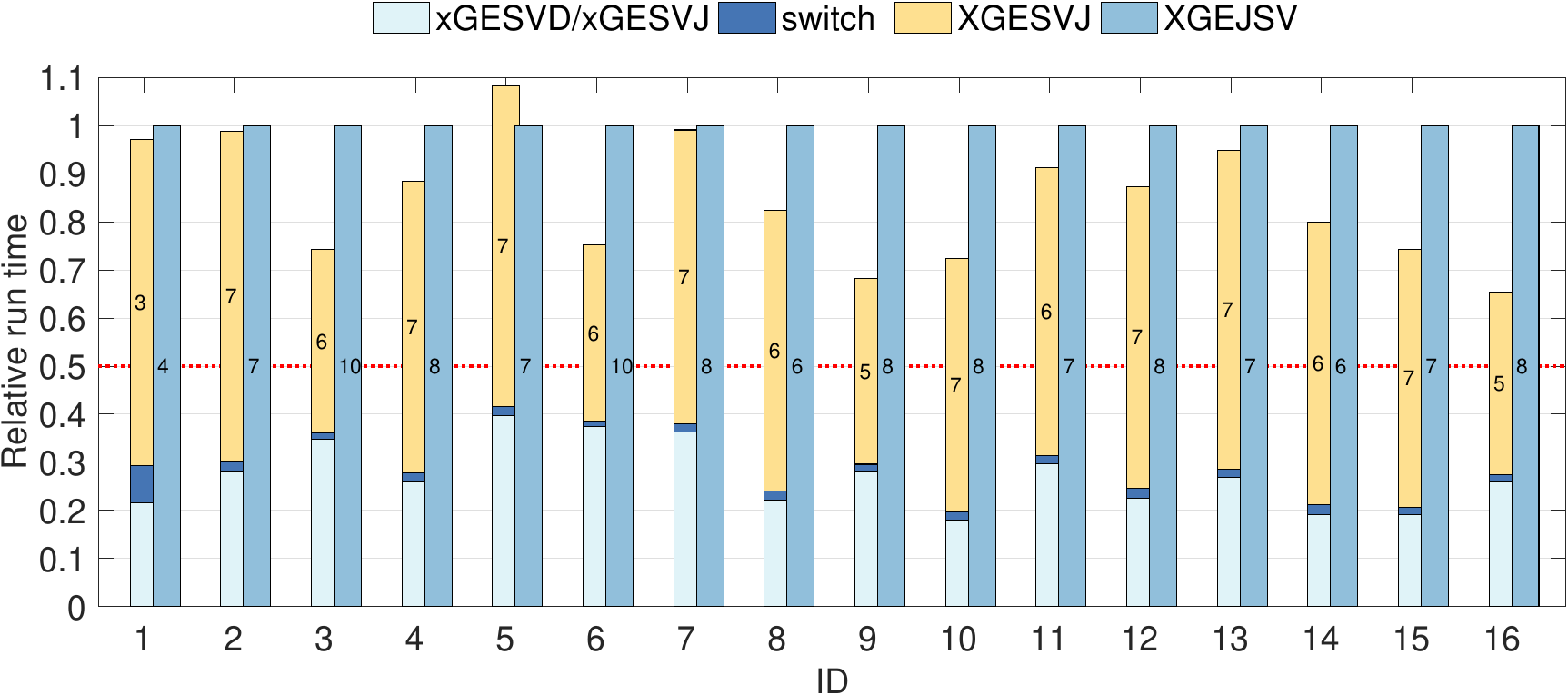}
\caption{Relative run time of Algorithm~\ref{alg:basic-msvj} for
$4096\times4096$ real matrices with
\(\bigl(\kappa(D),\kappa(B)\bigr)=(10^2,10^{12})\).
The numbers in the bars denote the numbers of iterations required by the Jacobi algorithm.}
\label{fig:msvj_simple}
\end{figure}

\subsection{Performance tests of the mixed precision RRQR algorithm}
In this subsection, we show the relative run time of Algorithm~\ref{alg:mprrqr} in
Figures~\ref{fig:mrrqr_4096_1e21e12} and~\ref{fig:mrrqr_4096_1e201e2}.
It can be seen that the speedup of Algorithm~\ref{alg:mprrqr} over
LAPACK's \texttt{DGEQP3} depends on the test matrices.
For most matrices it achieves about \(2\times\) speedup in general.
Therefore, it is worth developing a mixed precision algorithm for RRQR.

\begin{figure}[!tb]
\centering
\includegraphics[width=\textwidth]{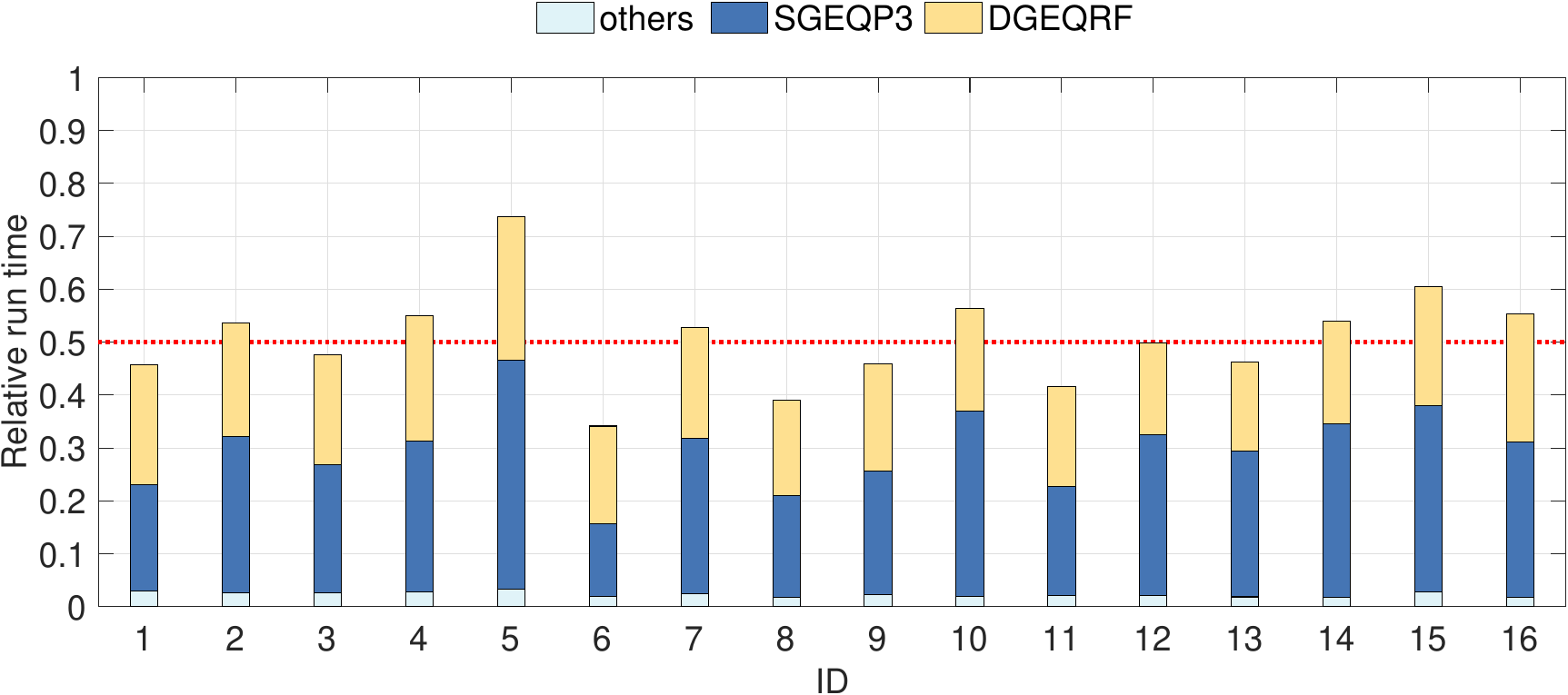}
\caption{Relative run time of Algorithm~\ref{alg:mprrqr} for
$4096\times4096$ real matrices with
\(\bigl(\kappa(D),\kappa(B)\bigr)=(10^2,10^{12})\).}
\label{fig:mrrqr_4096_1e21e12}
\end{figure}

\begin{figure}[!tb]
\centering
\includegraphics[width=\textwidth]{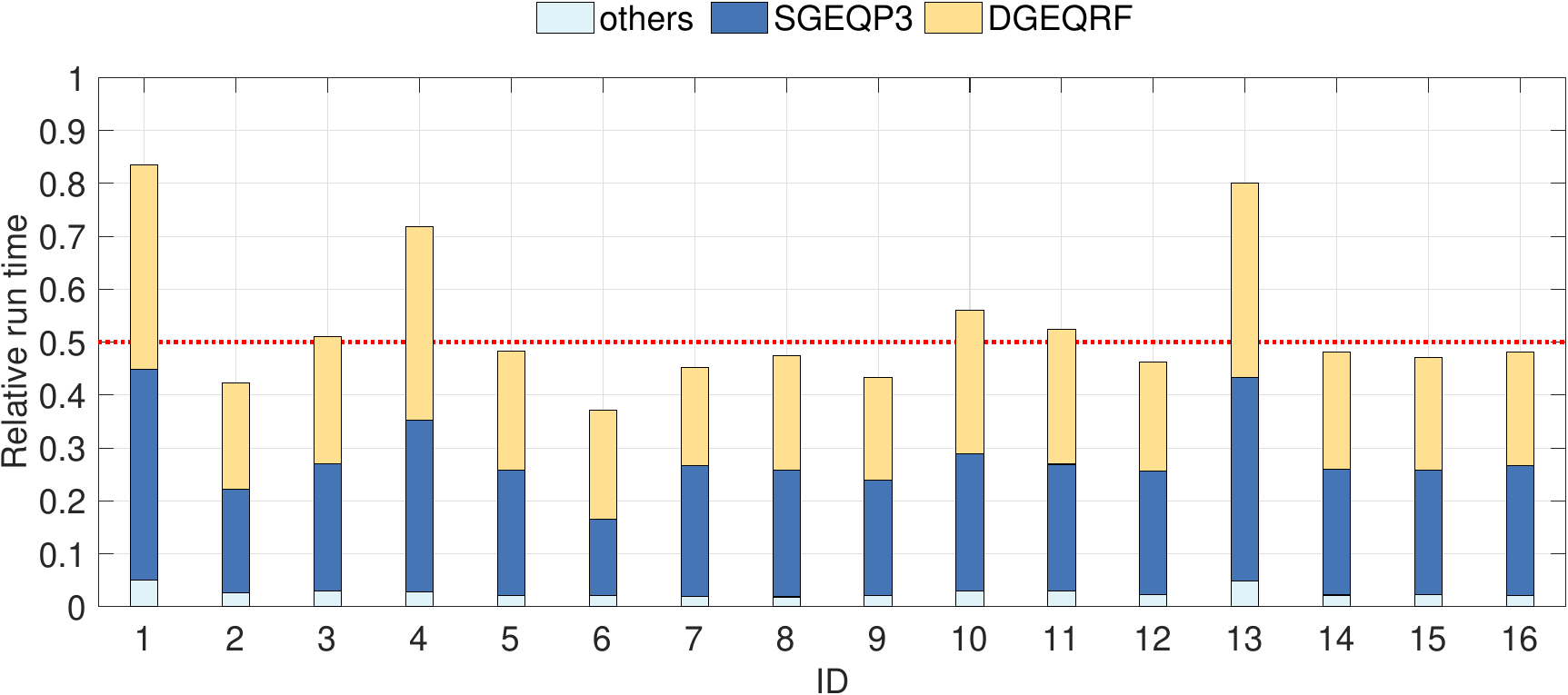}
\caption{Relative run time of Algorithm~\ref{alg:mprrqr} for
$4096\times4096$ real matrices with
\(\bigl(\kappa(D),\kappa(B)\bigr)=(10^{20},10^{2})\).}
\label{fig:mrrqr_4096_1e201e2}
\end{figure}

\subsection{Performance tests of the mixed precision Jacobi SVD algorithm}
Figures~\ref{fig:double_1e21e12_square_fixedsize}
and~\ref{fig:double_1e201e2_square_fixedsize} show the relative run time of
the SVD for \(4096\times4096\) real matrices computed by Algorithm~\ref{alg:msvj}.
It can be seen that when $\kappa(D)$ is moderate, Algorithm~\ref{alg:msvj}
achieves about \(2\times\) speedup in general compared to \texttt{DGEJSV}.
There are several cases (\(\mathrm{ID}\in\set{8,11,14}\)) that the speedup is
only about~\(20\%\).
These matrices are all generated with \(\mathtt{MODE}(\Sigma)=2\).
In these cases, the QR preconditioning stage is very effective so
that after preconditioning the columns of \(X\) are close to orthogonal with
\(X(:,i)\herm X(:,j)\approx10^{-2}\) for \(i\neq j\).
However, it still takes a lot of effort for the single precision Jacobi SVD
algorithm to improve the orthogonality from \(O(10^{-2})\) to \(O(10^{-6})\).
Thus, the performance gain for using a mixed precision algorithm is limited
for these cases.
When $\kappa(D)$ is extremely large, there are more cases that the run time of
Algorithm~\ref{alg:msvj} is almost the same as that of \texttt{DGEJSV} as
shown in Figure~\ref{fig:double_1e201e2_square_fixedsize}.
In these cases, the QR preconditioning stage is sufficiently
effective, so that the one-sided Jacobi algorithm in double
precision can converge very rapidly after preconditioning, making the SVD in
single precision useless.
By the techniques mentioned in Section~\ref{subsec:special-cases}, we can
detect these cases in an early stage and directly move on to the double precision
Jacobi SVD algorithm.
Algorithm~\ref{alg:msvj} is still slightly faster than \texttt{DGEJSV},
because the time spent on the RRQR algorithm is reduced by exploiting mixed
precision techniques.

\begin{figure}[!tb]
\centering
\includegraphics[width=\textwidth]{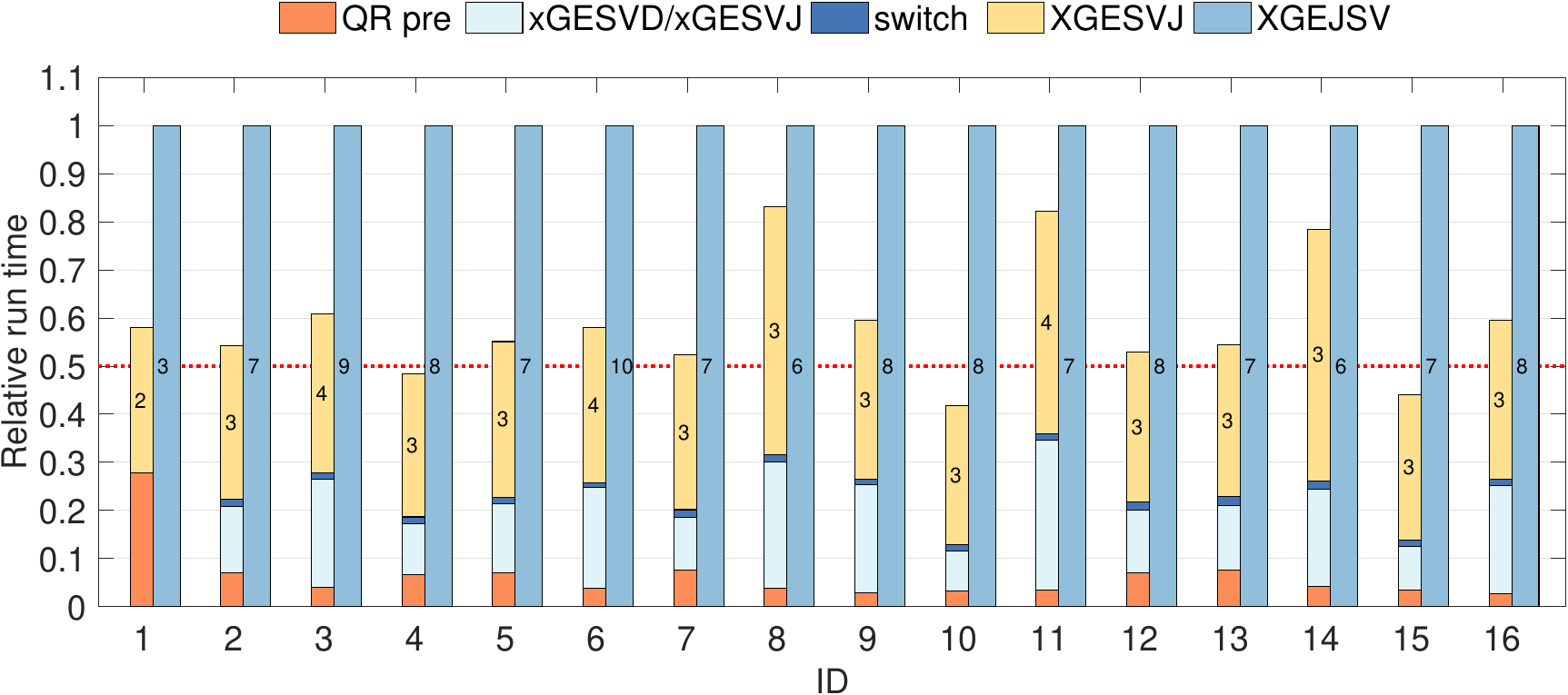}
\caption{Relative run time of Algorithm~\ref{alg:msvj} for
$4096\times4096$ real matrices with
\(\bigl(\kappa(D),\kappa(B)\bigr)=(10^2,10^{12})\).
The numbers in the bars denote the numbers of iterations required by the Jacobi algorithm.}
\label{fig:double_1e21e12_square_fixedsize}
\end{figure}

\begin{figure}[!tb]
\centering
\includegraphics[width=\textwidth]{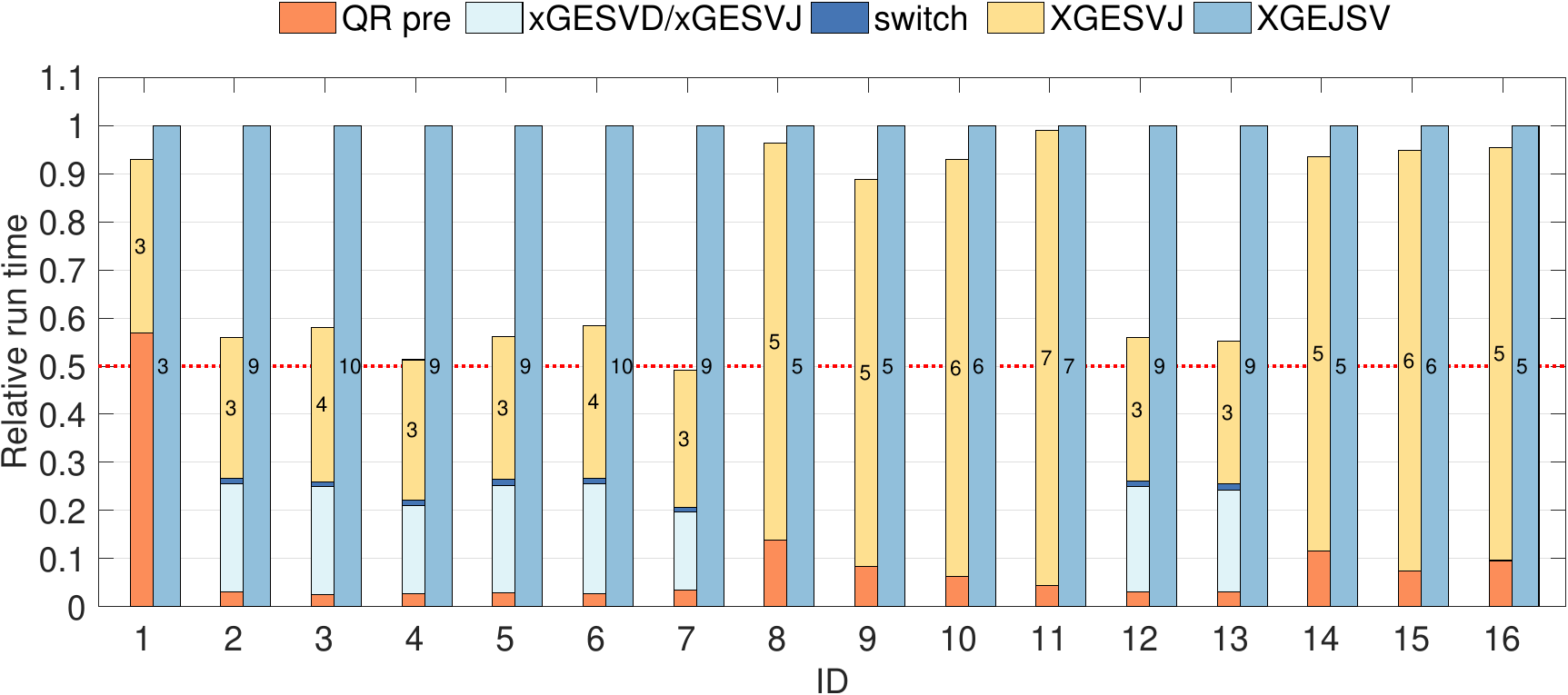}
\caption{Relative run time of Algorithm~\ref{alg:msvj} for
$4096\times4096$ real matrices with
\(\bigl(\kappa(D),\kappa(B)\bigr)=(10^{20},10^2)\).
The numbers in the bars denote the numbers of iterations required by the Jacobi algorithm.}
\label{fig:double_1e201e2_square_fixedsize}
\end{figure}

Similar results for \(4096\times4096\) complex matrices are shown in
Figures~\ref{fig:complex_1e21e12_square_fixedsize}
and~\ref{fig:complex_1e201e2_square_fixedsize}.
The speedup of Algorithm~\ref{alg:msvj} over \texttt{ZGEJSV} is about
\(2\times\) for most matrices, while those generated by
\(\mathtt{MODE}(\Sigma)=2\) have much lower speedup.
For a few test matrices the speedup is even higher than \(3\times\),
mainly because \texttt{ZGEJSV} ran into a conservative branch that requires
accumulating \(V\) explicitly.
Algorithm~\ref{alg:msvj} safely avoids the accumulation of \(V\) in these
cases.

\begin{figure}[!tb]
\centering
\includegraphics[width=\textwidth]{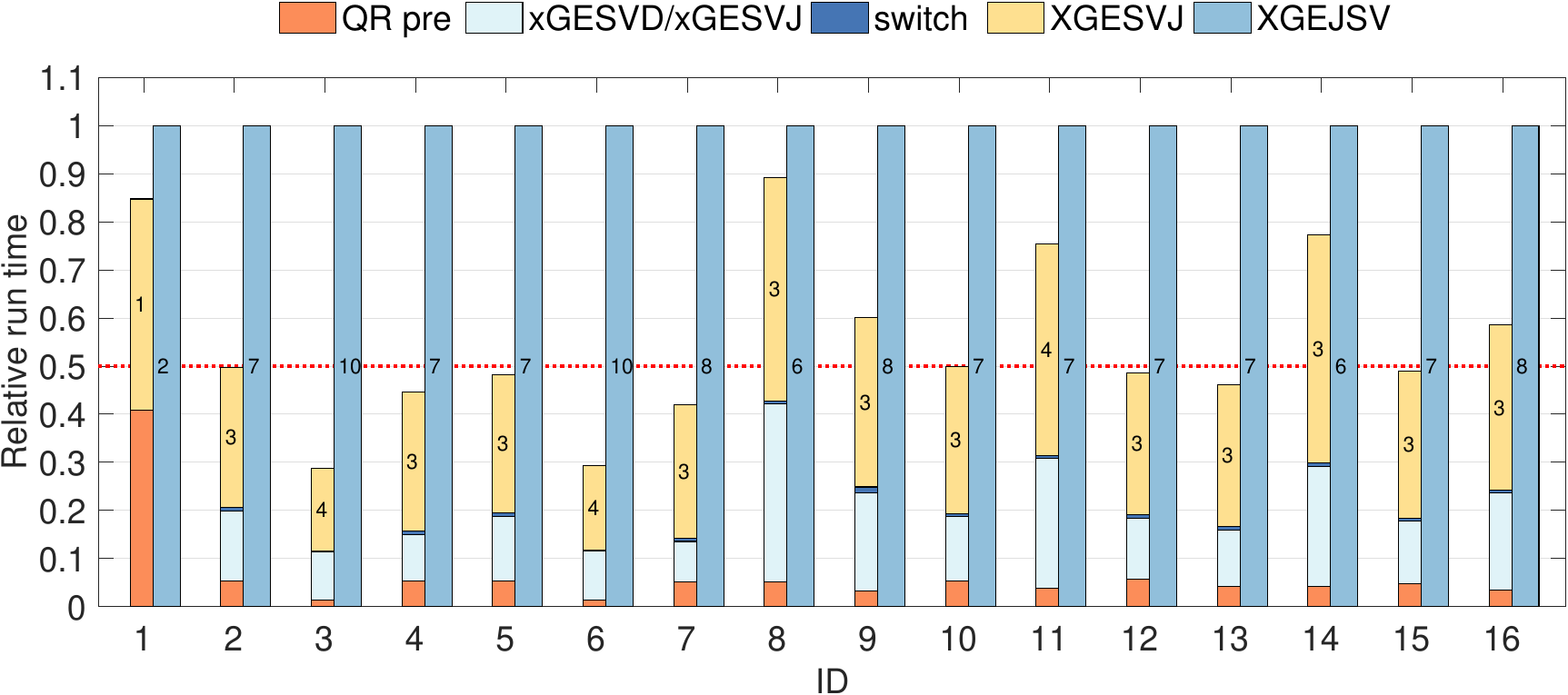}
\caption{Relative run time of Algorithm~\ref{alg:msvj} for
$4096\times4096$ complex matrices with
\(\bigl(\kappa(D),\kappa(B)\bigr)=(10^2,10^{12})\).
The numbers in the bars denote the numbers of iterations required by the Jacobi algorithm.}
\label{fig:complex_1e21e12_square_fixedsize}
\end{figure}

\begin{figure}[!tb]
\centering
\includegraphics[width=\textwidth]{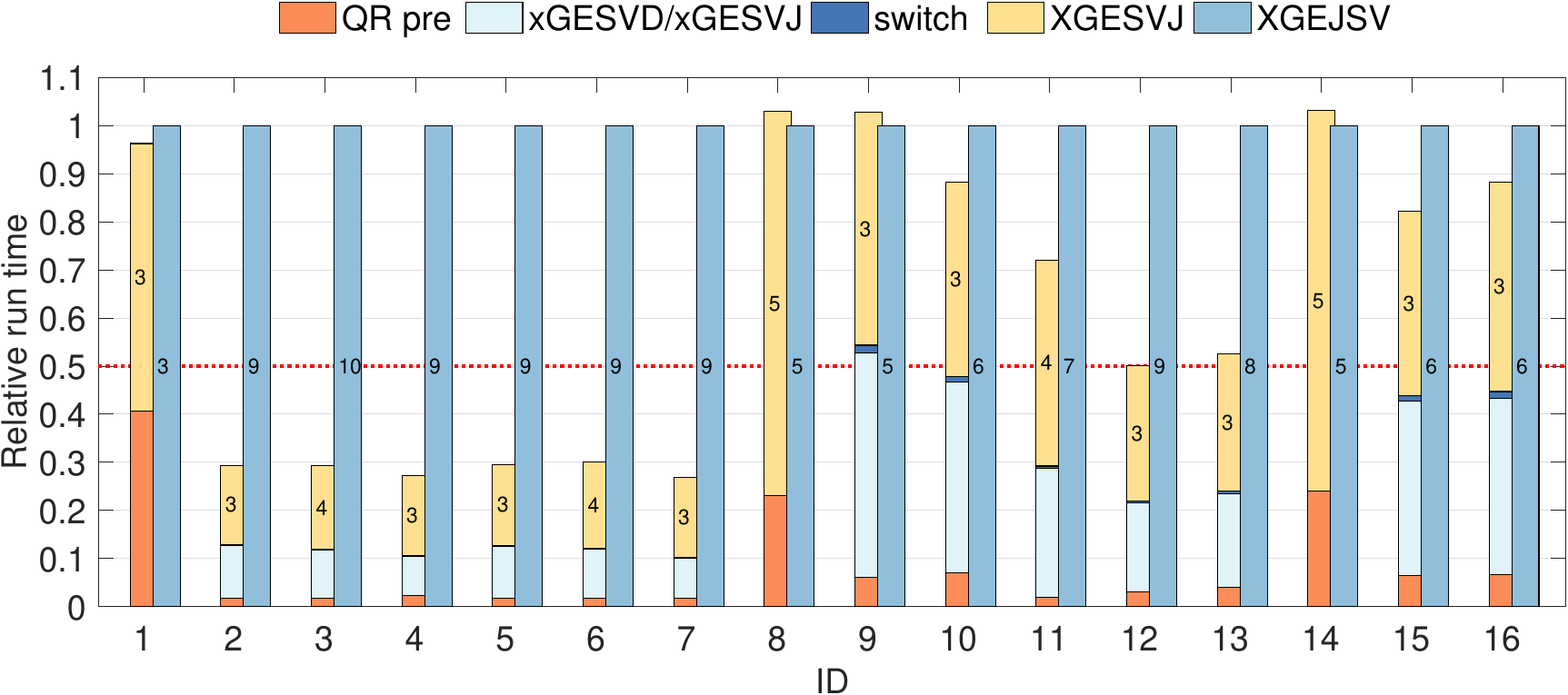}
\caption{Relative run time of Algorithm~\ref{alg:msvj} for
$4096\times4096$ complex matrices with
\(\bigl(\kappa(D),\kappa(B)\bigr)=(10^{20},10^2)\).
The numbers in the bars denote the numbers of iterations required by the Jacobi algorithm.}
\label{fig:complex_1e201e2_square_fixedsize}
\end{figure}

Figure~\ref{fig:double_1e21e12_rectangle_fixedsize} shows the relative
run time for rectangular matrices of size \(12288\times4096\) with
\(\bigl(\kappa(D),\kappa(B)\bigr)=(10^2,10^{12})\).
The behavior is very similar to that for square matrices.
A \(2\times\) speedup is observed for most matrices, except for those
generated by \(\mathtt{MODE}(\Sigma)=2\).

\begin{figure}[!tb]
\centering
\includegraphics[width=\textwidth]{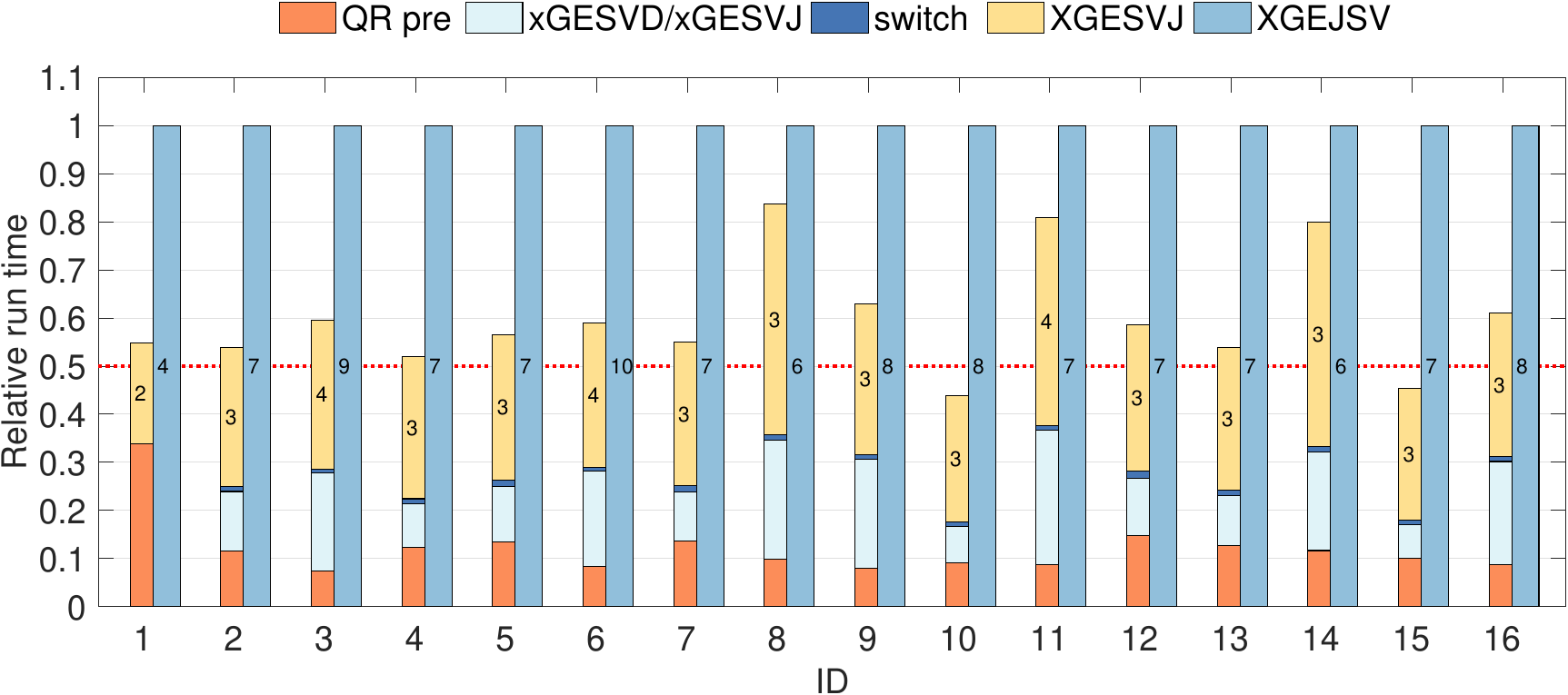}
\caption{Relative run time of Algorithm~\ref{alg:msvj} with
\(\bigl(\kappa(D),\kappa(B)\bigr)=(10^2,10^{12})\) for $12288\times4096$ real
matrices.
The numbers in the bars denote the numbers of iterations required by the Jacobi algorithm.}
\label{fig:double_1e21e12_rectangle_fixedsize}
\end{figure}

In Figure~\ref{fig:dd_1e21e12_fixedsize}, we use double--double%
\footnote{Double--double is a multiple-component format floating-point number,
represented by two \(64\)-bit floating-point numbers.
See, e.g., \cite{HLB2008}.}
and IEEE double precision as our working and lower precisions, respectively, to show
the relative run time of Algorithm~\ref{alg:msvj} for square matrices of size
\(1024\times 1024\) with \(\bigl(\kappa(D),\kappa(B)\bigr)=(10^2,10^{12})\).
In this case, we use the subroutine of MPLAPACK (see~\cite{N2022-MPLAPACK}) to
perform double--double operations.
Since the level 3 BLAS in MPLAPACK is not well-optimized, the proportion of
QR preconditioning and switching precision is very large.
But it still can be seen that our mixed precision algorithm effectively
accelerates the one-sided Jacobi SVD algorithm.

\begin{figure}[!tb]
\centering
\includegraphics[width=\textwidth]{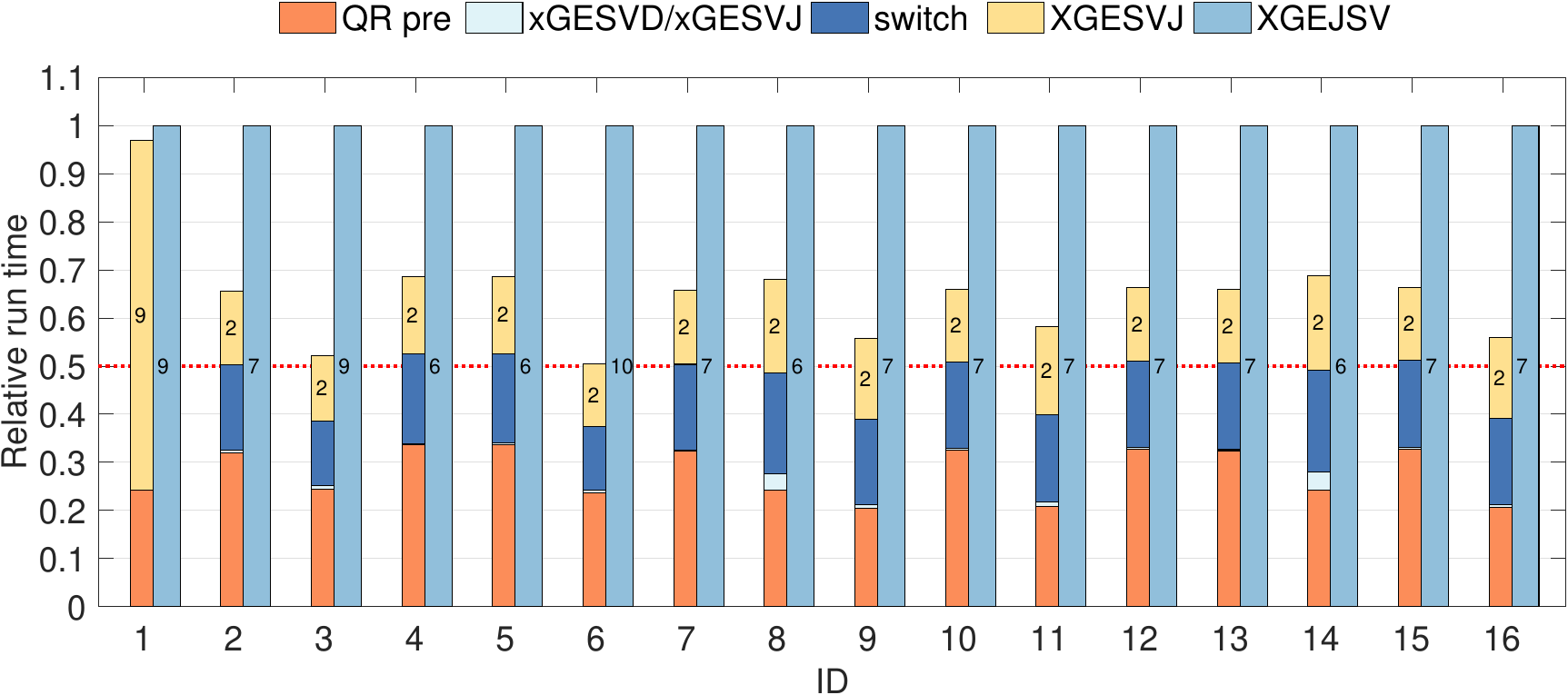}
\caption{Relative run time of Algorithm~\ref{alg:msvj} with
\(\bigl(\kappa(D),\kappa(B)\bigr)=(10^2,10^{12})\) for $1024\times 1024$ real
matrices.
In this case, working and lower precisions are double--double and
IEEE double precision, respectively.
The numbers in the bars denote the numbers of iterations required by the Jacobi algorithm.}
\label{fig:dd_1e21e12_fixedsize}
\end{figure}

We have tested our mixed precision algorithms for a number of matrices with
different sizes.
These test results can be found in the Appendix.

\subsection{Accuracy tests}
In this subsection, we demonstrate that Algorithm~\ref{alg:msvj} computes
singular values to satisfactory relative accuracy.

We test \(16\) types of \(1024\times1024\) matrices as in
Table~\ref{tab:testmat} with
\(\bigl(\kappa(D),\kappa(B)\bigr)=(10^{20},10^2)\) and measure the accuracy of
the computed singular values.
Let \(\hat\lambda_i^{\mathrm{Alg4}}\)'s and
\(\hat\lambda_i^{\mathtt{GEJSV}}\)'s, respectively, be the computed singular
values by Algorithm~\ref{alg:msvj} and \texttt{DGEJSV}.
We measure the relative difference between \(\hat\lambda_i^{\mathrm{Alg4}}\)
and \(\hat\lambda_i^{\mathtt{GEJSV}}\) by
\[
\biggl\lvert\frac{\hat\lambda_i^{\mathrm{Alg4}}
-\hat\lambda_i^{\mathtt{GEJSV}}}
{\hat\lambda_i^{\mathtt{GEJSV}}}\biggr\rvert.
\]
The maximum relative difference observed in our experiments is
\(4.79\times10^{-14}\).

We also measure the backward error
\[
\max_i{\biggl\lbrace\frac{\lVert (A-U\Sigma V\herm)(:, i)\rVert_2}{\lVert A(:, i)\rVert_2}\biggr\rbrace}
\]
and the orthogonality of \(U\) and \(V\) by
\[
\lVert U\herm U-I\rVert_\fro,\quad \lVert V\herm V-I\rVert_\fro.
\]
the largest values of these quantities are \(3.21\times 10^{-14}\),
\(5.85\times 10^{-12}\), and \(9.07\times 10^{-13}\), respectively.
Therefore, Algorithm~\ref{alg:msvj}, as a mixed precision algorithm, achieves
the same level of relative accuracy compared to \texttt{DGEJSV}.

\subsection{Tests of different switching precisions methods}
\label{subsec:experiments-pre}
In this subsection, we test several methods for switching precisions using
test matrices with \(\bigl(\kappa(D),\kappa(B)\bigr)=(10^2,10^{12})\).
We compare three methods as follows.
\begin{enumerate}
\item Compute \(X=U\Sigma V\herm\) in single precision by the Jacobi SVD
algorithm, and then obtain \(Y\gets XQ\), where \(Q\) is \(Q\)-factor in the
QR factorization of \(X\herm U\).
\item Compute \(X=U\Sigma V\herm\) in single precision by the Jacobi SVD
algorithm, and then obtain \(Y\gets XQ\), where \(Q\) is \(Q\)-factor in the
QR factorization of \(V\).
\item Compute \(X=U\Sigma V\herm\) in single precision by the divide and
conquer algorithm, and then obtain \(Y\gets XQ\), where \(Q\) is \(Q\)-factor
in the QR factorization of \(X\herm U\).
\end{enumerate}

In Figure~\ref{fig:pre}, we see that the first method is the most accurate one.
The first method is better than the second one, which is consistent with the
prediction made by Theorem~\ref{thm:effciency_switch}.
The divide and conquer algorithm is not a good choice here since the columns
of \(Y\) are far from orthogonal.

\begin{figure}[!tb]
\centering
\includegraphics[width=\textwidth]{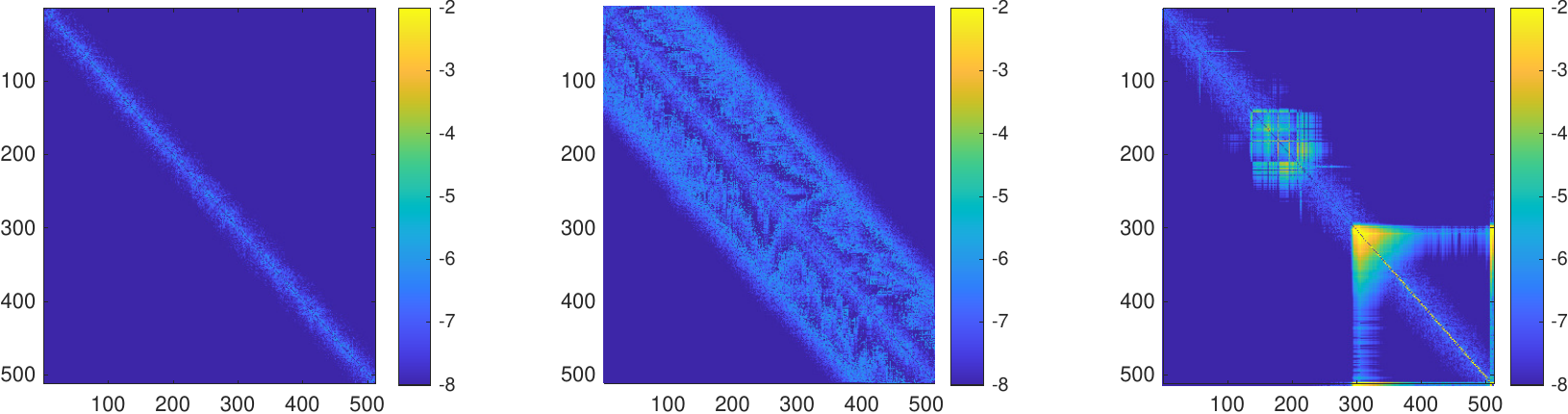}
\caption{Tests of different switching precisions methods: the
componentwise plots of
\(\log_{10}\lvert Y_c\herm Y_c-I\rvert\) produced by MATLAB's \texttt{imagesc},
where \(Y=Y_cD_c\), \(D_c\) is diagonal, and \(Y_c\) has unit column norms.
The subfigures from left to right represent the results of methods 1--3,
respectively.}
\label{fig:pre}
\end{figure}

\section{Conclusions}
\label{sec:conclusions}
In this paper we propose a mixed precision one-sided Jacobi SVD algorithm.
Our algorithm uses QR preconditioning and SVD in a lower precision to produce
an initial guess, and then use the one-sided Jacobi SVD algorithm in
working precision to refine the solution.
Special care is taken in converting the solution from lower precision to
working precision so that the transformed solution is reasonably
accurate and enables fast convergence in the refinement process.
On the x86-64 architecture our algorithm achieves about \(2\times\) speedup
compared to \texttt{DGEJSV}\slash{}\texttt{ZGEJSV} in LAPACK for most test
matrices.
Though a large portion of computations is performed in single precision, the
final solution is as accurate as that produced by double precision
solvers \texttt{DGEJSV}\slash{}\texttt{ZGEJSV}.
This is confirmed both theoretically and experimentally.

Our numerical experiments were performed on the x86-64 architecture, where
single precision arithmetic is usually twice faster compared to double
precision arithmetic.
The situation becomes different on modern GPUs---the performance gap between
lower and higher precision arithmetic is often far larger than \(2\times\),
and more floating-point formats (\texttt{fp64}, \texttt{fp32}, \texttt{fp16},
etc.) are available.
We expect that our algorithm will likely achieve higher speedup on GPUs.
High performance implementations of the mixed precision Jacobi SVD algorithm
on GPUs is planned as our future work.

\ifarxiv
Finally, we remark that it is possible to develop a mixed precision Jacobi
algorithm that computes the spectral decomposition of a Hermitian matrix.
A preliminary discussion can be found in Appendix~\ref{sec:eigen}.
\fi

\section*{Acknowledgments}
The authors thank Zhaojun Bai, Zlatko Drma\v{c}, Daniel Kressner, and Shengguo
Li for helpful discussions.
They are also grateful to the anonymous reviewers for their careful reading and constructive comments.

W.~Gao is partially supported by the National Key R\&D Program of China under
Grant Number 2021YFA1003305.
M.~Shao is partially supported by the National Key R\&D Program of China under
Grant Number 2023YFB3001603.

\ifarxiv

\bibliographystyle{siamplain}
\bibliography{mybib}
\begin{appendix}
\section{Supplementary experiments for SVD problem}
\label{sec:supplementary}
In the following we provide more numerical experiments of the mixed precision
Jacobi algorithm.
In the figures of this section, the \(x\)-axis represents the size of test
matrices and the \(y\)-axis represents the relative run time of
Algorithm~\ref{alg:msvj} comparing with \texttt{XGEJSV} in LAPACK.

Figures~\ref{fig:double_1e21e12_square} and~\ref{fig:double_1e201e2_square}
show the relative run time for different sizes \(\{2048, 4096, 6144, 8192\}\)
real square matrices with \((\kappa(D), \kappa(B)) = (10^2, 10^{12})\) and
\((\kappa(D), \kappa(B)) = (10^{20}, 10^2)\), respectively.
Similarly, Figures~\ref{fig:zplx_1e21e12_square}
and~\ref{fig:zplx_1e201e2_square} show the relative run time for different
sizes \(\{2048, 4096, 6144, 8192\}\) complex square matrices with
\((\kappa(D), \kappa(B)) = (10^2, 10^{12})\) and
\((\kappa(D), \kappa(B))) = (10^{20}, 10^2)\), respectively.
Figure~\ref{fig:double_1e21e12_rectangle} shows the relative run time for
different sizes \(\{6144\times2048, 12288\times4096, 18432\times6144\}\)
real rectangular matrices with \((\kappa(D), \kappa(B)) = (10^2, 10^{12})\).

\begin{figure}[!ptb]
\centering
\includegraphics[width=\textwidth]{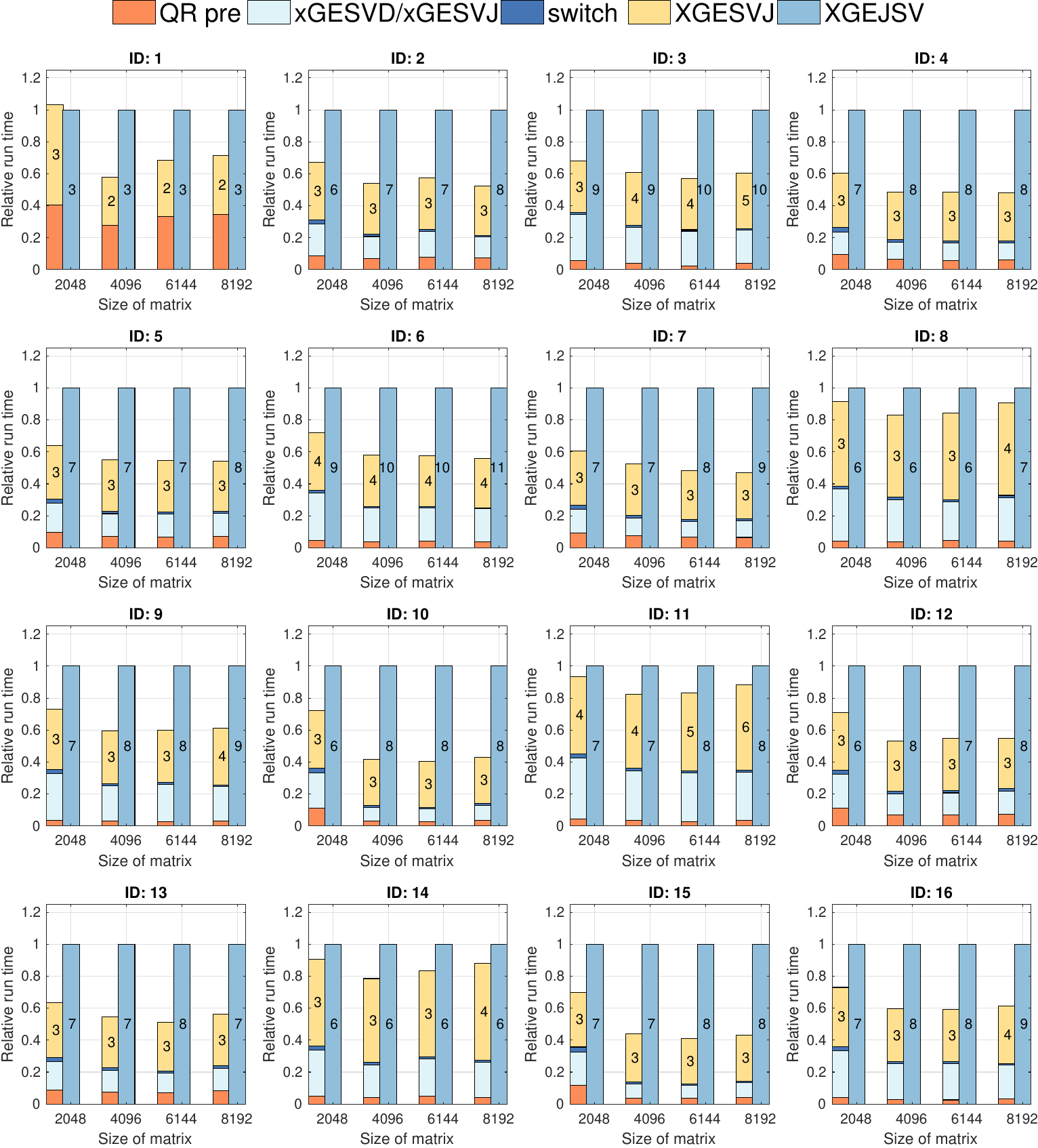}
\caption{Relative run time of Algorithm~\ref{alg:msvj} with
\((\kappa(D), \kappa(B)) = (10^2, 10^{12})\) for real square matrices of
different sizes \(\{2048, 4096, 6144, 8192\}\).
The numbers in the bars denote the numbers of iterations required by the Jacobi algorithm.}
\label{fig:double_1e21e12_square}
\end{figure}

\begin{figure}[!ptb]
\centering
\includegraphics[width=\textwidth]{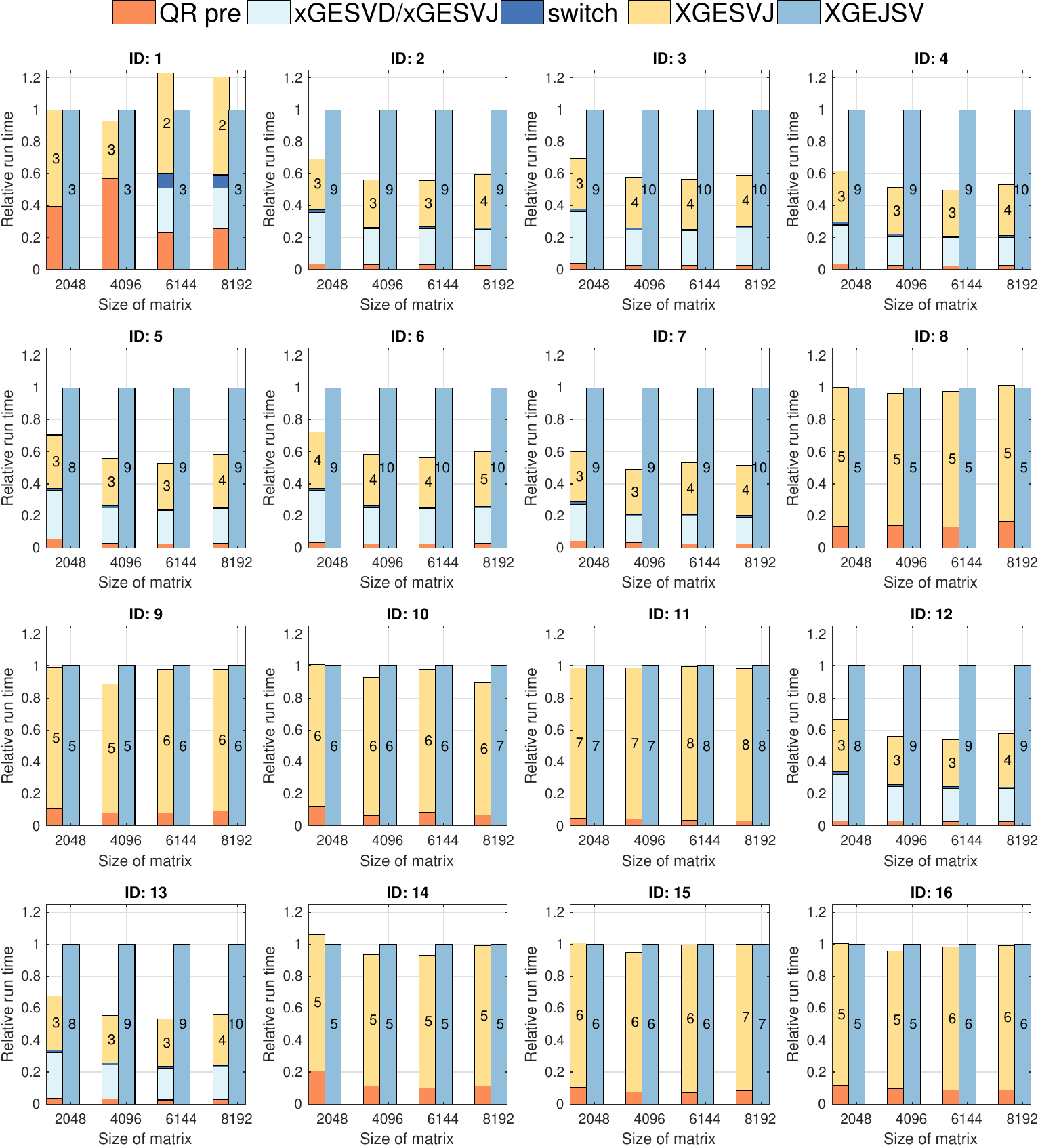}
\caption{Relative run time of Algorithm~\ref{alg:msvj}
with \((\kappa(D), \kappa(B)) = (10^{20}, 10^2)\) for real square matrices of
different sizes \(\{2048, 4096, 6144, 8192\}\).
The numbers in the bars denote the numbers of iterations required by the Jacobi algorithm.}
\label{fig:double_1e201e2_square}
\end{figure}

\begin{figure}[!ptb]
\centering
\includegraphics[width=\textwidth]{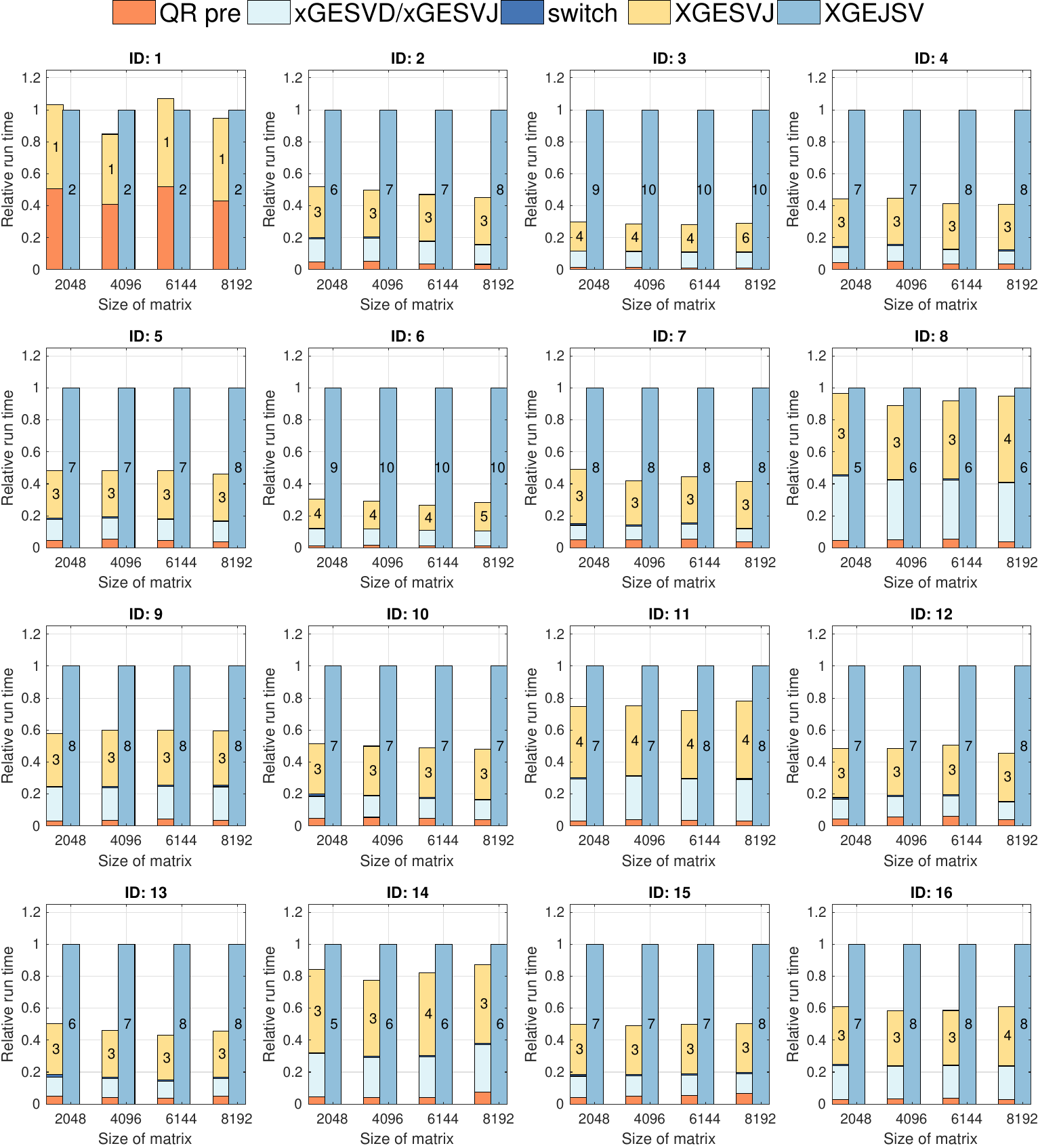}
\caption{Relative run time of Algorithm~\ref{alg:msvj}
with \((\kappa(D), \kappa(B)) = (10^2, 10^{12})\) for complex square matrices
of different sizes \(\{2048, 4096, 6144, 8192\}\).
The numbers in the bars denote the numbers of iterations required by the Jacobi algorithm.}
\label{fig:zplx_1e21e12_square}
\end{figure}

\begin{figure}[!ptb]
\centering
\includegraphics[width=\textwidth]{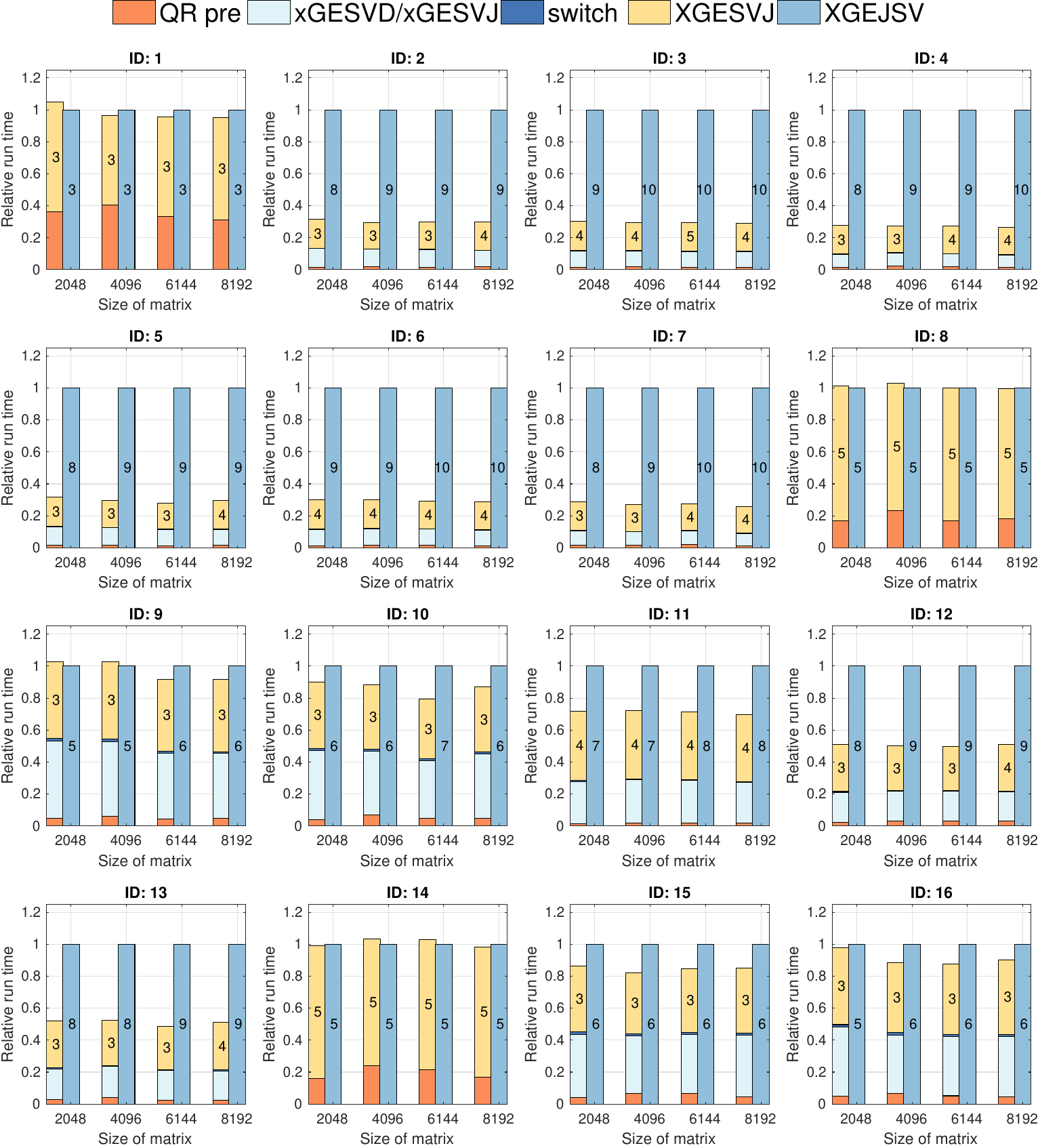}
\caption{Relative run time of Algorithm~\ref{alg:msvj} with
\((\kappa(D), \kappa(B)) = (10^{20}, 10^2)\) for complex square matrices of
different sizes \(\{2048, 4096, 6144, 8192\}\).
The numbers in the bars denote the numbers of iterations required by the Jacobi algorithm.}
\label{fig:zplx_1e201e2_square}
\end{figure}

\begin{figure}[!ptb]
\centering
\includegraphics[width=\textwidth]{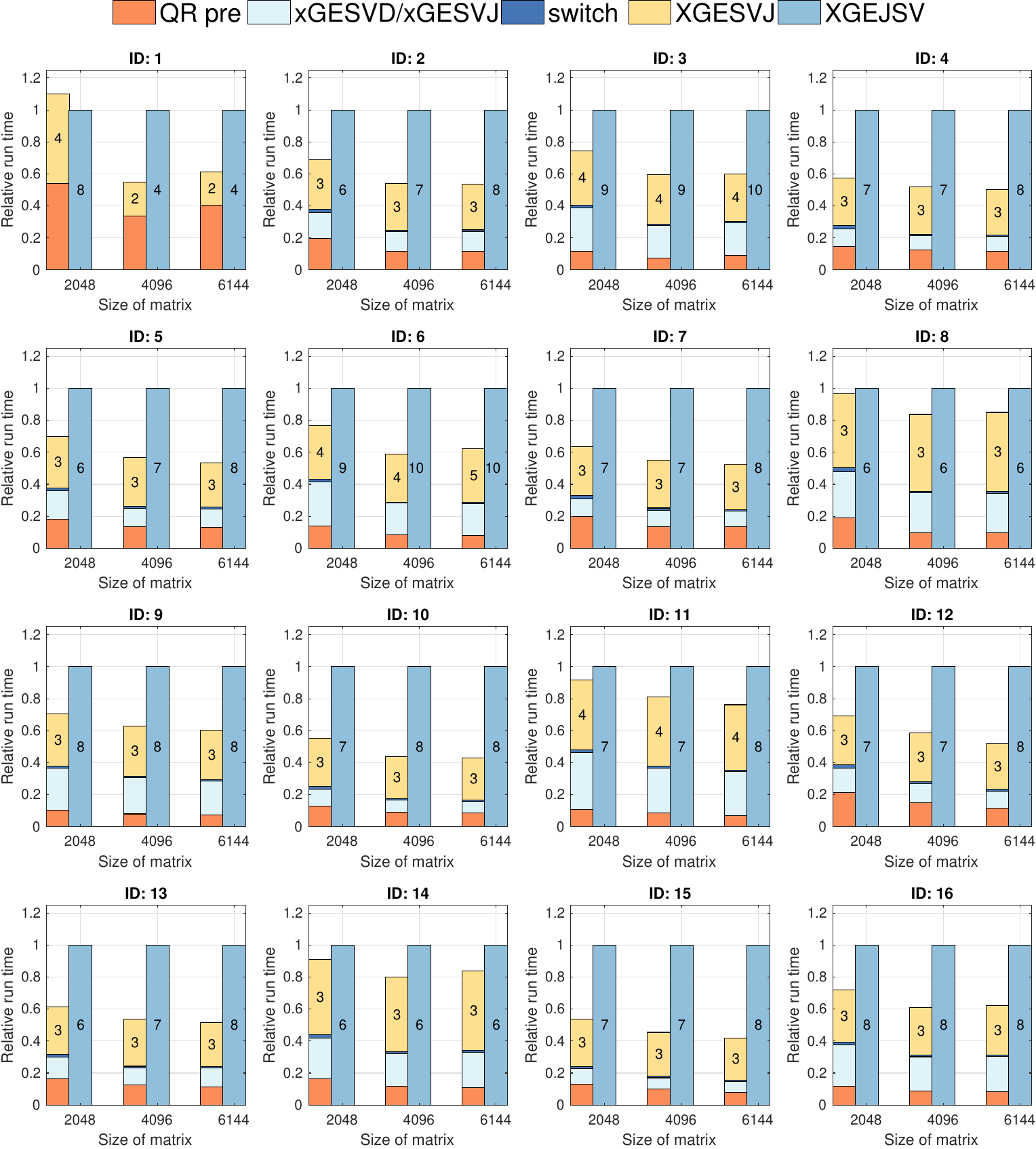}
\caption{Relative run time of Algorithm~\ref{alg:msvj} with
\((\kappa(D), \kappa(B)) = (10^2, 10^{12})\) for real rectangular matrices of
different sizes \(\{6144\times2048, 12288\times4096, 18432\times6144\}\).
The numbers in the bars denote the numbers of iterations required by the Jacobi algorithm.}
\label{fig:double_1e21e12_rectangle}
\end{figure}

\section{Jacobi SVD method for symmetric eigenvalue problem}
\label{sec:eigen}
We consider the symmetric eigenvalue problem
\[
AX = X\Lambda,
\]
where \(A\) is a given Hermitian matrix and $\Lambda$ is a diagonal matrix
with eigenvalues \(\lambda_1\), \(\dots\), \(\lambda_n\).
It is possible to use, for instance, the QR algorithm to compute an
approximate spectral decomposition in lower precision, and then use
the Jacobi algorithm to refine the solution in working precision.
This mixed precision approach, which is essentially a Jacobi algorithm, is
preferred when high relative accuracy of the spectrum is of interest.
However, since two-sided Jacobi eigenvalue algorithm has very low efficiency,
we consider applying the one-sided Jacobi SVD algorithm in the refinement
stage.

Suppose that the SVD of \(A\) is \(A=U\Sigma V\herm\).
A simple case is that \(A\) is either positive semidefinite or negative
semidefinite.
In this case \(\Lambda=U\herm AU\) is diagonal, so that the output from the
one-sided Jacobi SVD algorithm can be directly used.
Complication occurs when \(A\) is indefinite.
Let \(\sigma_1>\sigma_2>\dotsb>\sigma_m\) be the distinct singular values of
\(A\) without counting multiplicity.
Then we have
\begin{equation}
\label{eq:blockdiag}
\tilde A=U\herm AU
=\bmat{\tilde A_1 \\ & \tilde A_2 \\ & & \ddots \\ & & & \tilde A_m},
\end{equation}
where the size of each diagonal block \(\tilde A_i\) is the multiplicity of
the singular value \(\sigma_i\), so that both \(\sigma_i\) and \(-\sigma_i\)
can possibly be eigenvalues of \(A\).
We can identify the size of these diagonal blocks by computing the gap of
consecutive singular values---if two singular values are really close, they
are regarded as the same.
Then each diagonal block is further diagonalized through a usual symmetric
eigensolver in working precision.
This step is expected to be modestly cheap unless \(A^2\) has some eigenvalues
with very high multiplicity.
Finally, as a postprocessing step, the eigenvalues of \(A\) are sorted in
ascending order to match the output format of a usual symmetric eigensolver.

\begin{algorithm}[!tb]
\caption{Mixed precision algorithm for symmetric eigenvalue problem}
\label{alg:jacobi-eigen}
\begin{algorithmic}[1]
\REQUIRE An \(n\)-by-\(n\) Hermitian matrix \(A\).
\ENSURE The spectral decomposition of \(A\).

\STATE Compute the spectral decomposition of \(\single(A)\), i.e.,
    \(\single(A)=X_{\s}\Lambda_{\s}X_{\s}\herm\).
\STATE Orthogonalize the columns of \(X_{\s}\) in working precision,
    \(\double(X_{\s})=QR\), where \(R\) is not explicitly formed.
\STATE Apply the one-sided Jacobi SVD algorithm to \(AQ\) to obtain
    \(AQ=U\Sigma V\herm\), where \(V\) is not explicitly formed.
\STATE Identify the block structure~\eqref{eq:blockdiag}.
\STATE Diagonalize each diagonal block \(\tilde A_i\), and accumulate the
corresponding unitary transformation on \(U\).
\STATE Reorder the eigenvalues and eigenvectors of \(A\).
\end{algorithmic}
\end{algorithm}

We implement the mixed precision one-sided Jacobi eigenvalue algorithm
(i.e., Algorithm~\ref{alg:jacobi-eigen}) based on LAPACK subroutines
\texttt{xSYEVD} and \texttt{XGEJSV}.
Then test matrices with parameters \(\texttt{MODE}_\Sigma=4\),
\(\kappa(B)=10^{16}\) and \(\kappa(D)=1\), i.e. \(A=B\), are used to compare
Algorithm~\ref{alg:jacobi-eigen} with the QR algorithm in terms of accuracy.

From Figure~\ref{fig:eig-error}, we can see that
Algorithm~\ref{alg:jacobi-eigen} is more accurate than the QR algorithm,
especially for eigenvalues with small magnitude.
However, we remark that Algorithm~\ref{alg:jacobi-eigen} is slower than any
popular dense symmetric eigensolver.
We observe that, compared to \texttt{DSYEV} in LAPACK,
Algorithm~\ref{alg:jacobi-eigen} is usually much slower
on CPU;
see Figure~\ref{fig:eig-performance}.
Thus, Algorithm~\ref{alg:jacobi-eigen} is of interest only when eigenvalues
need to be computed to high relative accuracy.

\begin{figure}[!tbh]
\centering
\includegraphics[width=\textwidth]{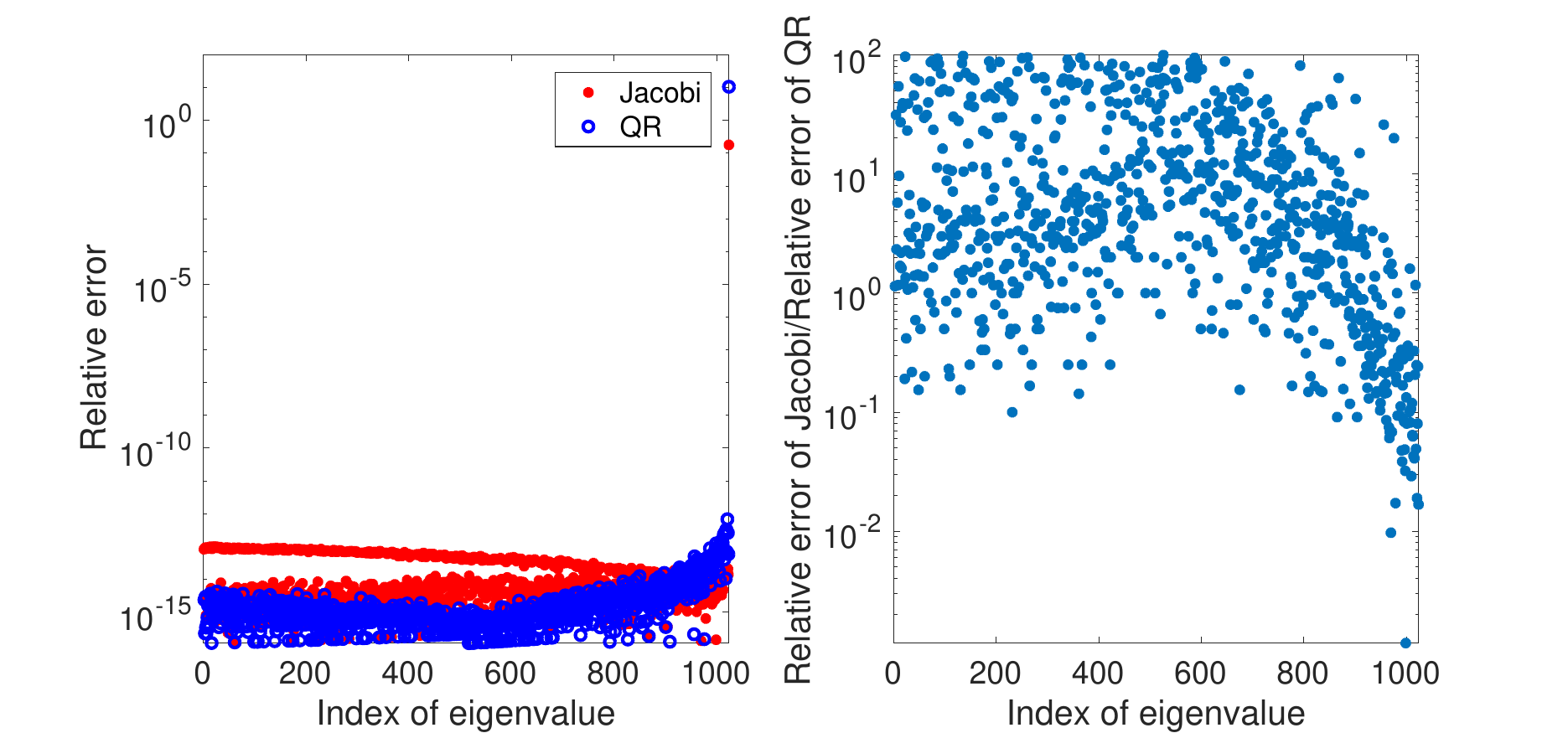}
\caption{Relative error of computed eigenvalues of a \(1024\times1024\) real
symmetric matrix.
Left: Relative errors computed by Jacobi and QR algorithms.
Right: ratio of the relative error of the mixed precision Jacobi algorithm to
that of the QR algorithm for each individual eigenvalue.}
\label{fig:eig-error}
\end{figure}

\begin{figure}[!tbh]
\centering
\includegraphics[width=\textwidth]{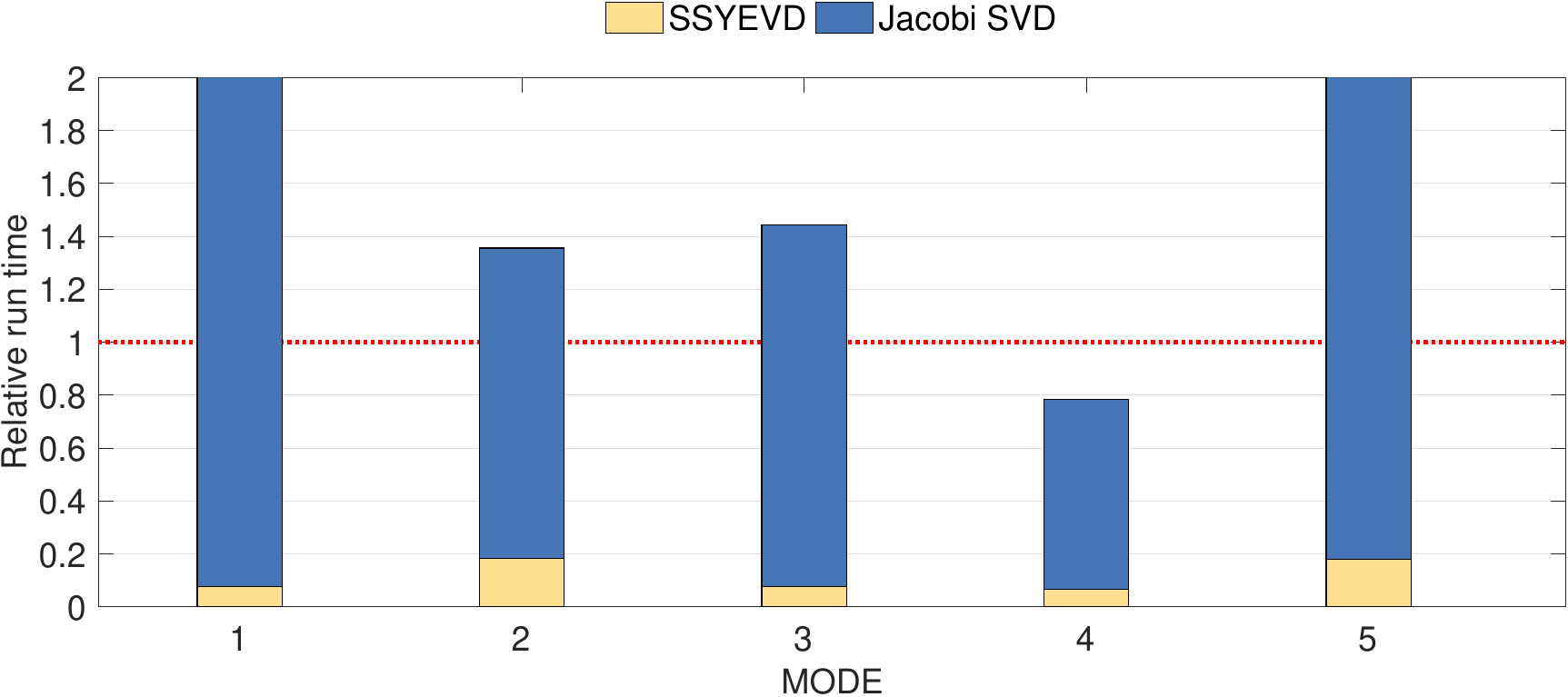}
\caption{Relative run time of Algorithm~\ref{alg:jacobi-eigen} with \(\kappa(A) = 10^{15}\) for real symmetric matrices of the size \(2048\).
Different \texttt{MODE}s represent different eigenvalue distributions; see Table~\ref{tab:XLATM1}.
For \(\texttt{MODE} = 1\) or \(5\), the relative run time exceeds \(2\), rendering it impractical, so the excess was disregarded.}
\label{fig:eig-performance}
\end{figure}

\end{appendix}

\else

\begin{appendix}

\end{appendix}
\bibliographystyle{siamplain}
\bibliography{mybib}

\fi

\end{document}